\newtheorem{thmintro}{Theorem}
\newtheorem{thm}{Theorem}[section]
\newtheorem{lemma}[thm]{Lemma}
\newtheorem{prop}[thm]{Proposition}
\newtheorem{problem}{Problem}
\theoremstyle{definition}
\theoremstyle{remark}
\newtheorem{oss}[thm]{Remark}
\newcommand{\bnabla}{\bar \nabla}
\newcommand{\C}{\mathbb C}
\newcommand{\R}{\mathbb R}
\newcommand{\Z}{\mathbb Z}
\newcommand{\CP}{\mathbb{CP}}
\newcommand{\RP}{\mathbb{RP}}
\newcommand{\lie}{\mathfrak}
\renewcommand{\O}{\Ort}
\DeclareMathOperator{\tr}{tr}
\DeclareMathOperator{\Aut}{Aut}
\DeclareMathOperator{\U}{U}
\DeclareMathOperator{\SO}{SO}
\DeclareMathOperator{\G}{G}
\DeclareMathOperator{\Ort}{O}
\DeclareMathOperator{\id}{id}
\DeclareMathOperator{\Rm}{Rm}
\DeclareMathOperator{\Ric}{Ric}
\DeclareMathOperator{\const}{const}
\DeclareMathOperator{\vol}{vol}
\DeclareMathOperator{\dive}{div}
\renewcommand{\div}{\dive}
\renewcommand{\phi}{\varphi}
\renewcommand{\epsilon}{\varepsilon}
\renewcommand{\bar}{\overline}
\renewcommand{\tilde}{\widetilde}
\newcommand{\de}{\partial}
\newcommand{\call}{\mathcal}
\DeclareMathOperator{\Spin}{Spin}
\DeclareMathOperator{\Defo}{Def}
\title{On minimal Legendrian submanifolds of Sasaki-Einstein manifolds} 
\author{Simone Calamai and David Petrecca} \thanks{Both authors are members of ``Gruppo Nazionale per le Strutture Algebriche, Geometriche e le loro Applicazioni''.}
\keywords{Sasaki Einstein manifold, Legendrian submanifolds, Minimal submanifolds, Totally geodesic submanifolds, Laplacian eigenfunctions, contact moment map, Nomizu operator}
\subjclass[2010]{53C25 (primary), 53C42 (secondary)} 
\address{(S.~Calamai) Dip. di Matematica e Informatica ``U. Dini'' - Universit\`a di Firenze \endgraf Viale Morgagni 67A -  Firenze - Italy}
\email{simocala at gmail.com}
\address{(D.~Petrecca) Dipartimento di Matematica - Universit\`a di Pisa \endgraf Largo Pontecorvo 5 -  Pisa - Italy}
\email{petrecca at mail.dm.unipi.it}
\begin{document}
\maketitle
\begin{abstract}
For a given minimal Legendrian submanifold  $L$ of a Sasaki-Einstein manifold we construct two families of eigenfunctions of the Laplacian of $L$ and we give a lower bound for the dimension of the corresponding eigenspace. Moreover, in the case the lower bound is attained, we prove that $L$ is totally geodesic and a rigidity result about the ambient manifold. This is a generalization of a result for the standard Sasakian sphere done by L\^e and Wang.
\end{abstract}

\section*{Introduction}
Let $(M, \eta, g)$ be a Sasakian manifold of dimension $2n+1$. A minimal Legendrian submanifold is an $n$-dimensional submanifold $i: L \rightarrow M$ on which the contact form vanishes, $i^*\eta = 0$ and is minimal in the sense of Riemannian geometry with respect to the metric induced from $g$.

In the case where the minimal Legendrian $L$ is embedded in the standard Sasakian round $(2n+1)$-sphere, L\^e and Wang \cite{lewang} constructed a family of functions on $L$ which are eigenfunctions of the Laplacian on $L$ of the induced metric. They give also a lower bound of the dimension of the relative eigenspace and if it is attained then the submanifold is totally geodesic. Conversely they prove that a minimal submanifold of the standard sphere admitting that certain family of functions as Laplacian eigenfunctions is necessarily Legendrian.

Although their techniques make a heavy use of the particular situation, namely the theory of minimal immersion in spheres and the presence of an ambient Euclidean space,  we prove that some of their ideas can be generalized to any Sasaki-Einstein manifold.

Let $L$ be a minimal Legendrian submanifold of a Sasaki-Einstein $M$. The aim of this paper is to prove that two certain families of functions on $L$, one of which constructed in terms of the contact moment map of the action of the Sasaki automorphism group, are eigenfunctions of the Laplacian of $L$ and we give a lower bound for the dimension of the eigenspace.
\begin{thmintro}
Let $L^n$ be a minimal Legendrian submanifold of an $\eta$-Sasaki-Einstein manifold $(M^{2n+1}, \eta, \xi, g, \Phi)$ with algebra of infinitesimal Sasaki automorphisms $\lie g$. Then, for each $X \in \lie g$, the function
\[
\eta(X) - \frac 1 {\vol(L)} \int_L \eta(X)dv,
\]
where $dv$ is the volume form of $L$ of the induced metric, is an eigenfunction of the Laplacian $\Delta_L$ with eigenvalue $2n+2$. Moreover the dimension of the $(2n+2)$-eigenspace is at least $\dim \lie g - \frac 1 2 n( n+1) -1$.
\end{thmintro}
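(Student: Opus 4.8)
The plan is to reduce everything to an ambient computation via the minimality of $L$, and then to extract the eigenvalue from the Sasaki--Einstein condition through a Bochner-type identity. Write $f=\eta(X)=g(\xi,X)$ and use that, since $X$ is an infinitesimal automorphism, $\lieder_X\eta=0$, so $df=-\iota_X d\eta$ and hence $\grad_M f=2\Phi X$; differentiating once more with the Sasakian identity $(\nabla_Y\Phi)Z=g(Y,Z)\xi-\eta(Z)Y$ gives an explicit $\Hess_M f$. Pick a local orthonormal frame $e_1,\dots,e_n$ of $TL$; because $L$ is Legendrian the vectors $e_1,\dots,e_n,\Phi e_1,\dots,\Phi e_n,\xi$ form an orthonormal frame of $TM$ along $L$, with normal bundle $\Phi(TL)\oplus\R\xi$. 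Minimality gives, for any ambient $u$, the relation $\Delta_L(u|_L)=-\sum_i\Hess_M u(e_i,e_i)$, so I first compute $\sum_i\Hess_M f(e_i,e_i)$.

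In this trace the $(\nabla\Phi)$-term contributes $-n\,\eta(X)$ (using $\eta(e_i)=0$), leaving $\sum_i g(\Phi\nabla_{e_i}X,e_i)=-\sum_i g(\nabla_{e_i}X,\Phi e_i)$. Using the skew-symmetry of $\nabla X$ (as $X$ is Killing), the identity $\nabla_\xi X=-\Phi X$, and the relation $\nabla_{\Phi Y}X-\Phi\nabla_Y X=g(X,Y)\xi-\eta(Y)X$ — both consequences of $\lieder_X\Phi=0$ — the partial trace over the Legendrian frame reassembles into the full trace, yielding $\Delta_L\eta(X)=2n\,\eta(X)-\tr(\Phi\nabla X)\big|_L$. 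The heart of the argument, and what I expect to be the main obstacle, is the pointwise identity
\[
\tr(\Phi\nabla X)+2\,\eta(X)=\const \quad\text{on } L .
\]
This is the Sasaki--Einstein (transverse Matsushima) phenomenon: it is where $\Ric=2n\,g$ enters, through the Bochner formula $\Delta_M\langle\xi,X\rangle=-4n\langle\xi,X\rangle+2\langle\nabla\xi,\nabla X\rangle$ for the two Killing fields $\xi,X$ together with the Sasaki--Einstein curvature contraction $\sum_a R(f_a,\Phi f_a)Y=-2\,\Phi Y$ for horizontal $Y$ (which one checks directly on the round sphere). Granting the boxed identity with value $k$, one gets $\Delta_L\eta(X)=(2n+2)\eta(X)-k$; integrating over $L$ forces $k=(2n+2)\,c_X$ with $c_X=\tfrac1{\vol(L)}\int_L\eta(X)\,dv$, so indeed $\Delta_L f_X=(2n+2)f_X$.

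For the dimension estimate I would study the linear map $\lie g\to C^\infty(L)$, $X\mapsto f_X$, whose image lies in the $(2n+2)$-eigenspace; it suffices to bound the kernel $K=\{X:\eta(X)|_L\text{ is constant}\}$. Since the tangential gradient of $\eta(X)$ is $2(\Phi X)^{\top}$, membership in $K$ is equivalent to $X|_L\in TL\oplus\R\xi$; writing $X|_L=X^{\top}+s\,\xi$ with $s$ constant and using $\nabla_Y\xi=-\Phi Y$ together with the normality of $\Phi(TL)$, one finds $(\lieder_{X^{\top}}\,i^*g)(Y,Z)=(\lieder_X g)(Y,Z)=0$, so $X^{\top}$ is a Killing field of $(L,i^*g)$. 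The key point is then that a Sasaki automorphism vanishing on $L$ vanishes identically: if $X|_L=0$ then at $p\in L$ we have $X_p=0$ and $\nabla_Y X=0$ for $Y\in T_pL$, while $\nabla_\xi X=-\Phi X=0$ and $\lieder_X\Phi=0$ forces $\nabla X$ to commute with $\Phi$ at $p$, so $\nabla X$ annihilates $T_pL\oplus\Phi(T_pL)\oplus\R\xi=T_pM$; thus $X$ has vanishing $1$-jet at $p$ and is zero. Hence $X\mapsto\big(X^{\top}_p,(\nabla^L X^{\top})_p,s\big)$ is injective on $K$, and its target $T_pL\oplus\mathfrak{so}(T_pL)\oplus\R$ has dimension $n+\tfrac12 n(n-1)+1=\tfrac12 n(n+1)+1$. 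Therefore $\dim K\le\tfrac12 n(n+1)+1$, and the $(2n+2)$-eigenspace has dimension at least $\dim\lie g-\tfrac12 n(n+1)-1$.
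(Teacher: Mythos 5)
Your strategy for the eigenfunction claim is genuinely different from the paper's: the paper never computes a Hessian, but instead observes that the normal part $X_2$ of $X|_L$ is an infinitesimal Legendrian deformation which is minimal because normal parts of Killing fields are Jacobi fields (Simons), and then quotes Ohnita's identification $\Defo(L)=\R\oplus\{f:\Delta_Lf=(A+2)f\}$, so that $\eta(X)$ minus its mean is an eigenfunction with no curvature computation at all; note this soft argument also covers the $\eta$-Sasaki--Einstein case with general constant $A$ (eigenvalue $A+2$), whereas your constants are tied to $\Ric=2n\,g$. Your computational route is viable in principle, but as written it has a genuine gap precisely at what you yourself flag as the main obstacle: the identity $\tr(\Phi\nabla X)+2\eta(X)=\const$ is \emph{granted}, not proved, and neither ingredient you offer proves it. The Bochner formula $\Delta_M\langle\xi,X\rangle=-4n\langle\xi,X\rangle+2\langle\nabla\xi,\nabla X\rangle$ holds on \emph{every} Sasakian manifold, Einstein or not, because $\Ric(\xi,\cdot)=2n\,\eta$ is automatic in Sasakian geometry; since $\langle\nabla\xi,\nabla X\rangle=\pm\tr(\Phi\nabla X)$, that formula merely restates the definition of your trace and carries no Einstein information, so it cannot be ``where $\Ric=2n\,g$ enters''. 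The Einstein hypothesis has to enter when you differentiate $h=\tr(\Phi\nabla X)$, via $\nabla\nabla X=\Rm(\cdot,X)$ and the contraction $\sum_a\Rm(f_a,\Phi f_a,\cdot,\cdot)$ --- and for that contraction you offer only a verification on the round sphere, which proves nothing for a general Sasaki--Einstein manifold. To close the gap you would either have to show that this contraction is a universal multiple of $g(\Phi\cdot,\cdot)$ determined by the transverse K\"ahler--Einstein equation (all Sasaki--Einstein manifolds share transverse Einstein constant $2n+2$, after which evaluating the constant on the sphere becomes legitimate), or pass to the Ricci-flat cone as the paper does: Lemma \ref{lemmapropMK}\eqref{divJKconst} proves $\div(JX)=\const$ from $\bar\Ric=0$, and \eqref{eq:etaXfX} shows $f_X=\eta(X)+\frac1{2n+2}\div(JX)$, which is exactly your boxed identity. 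So the identity is true and your route is fixable, but the crux of the eigenvalue computation is currently asserted rather than established.

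Your second half, by contrast, is complete and in one respect more careful than the paper. The paper bounds the kernel of $\alpha(X)=\chi(X_2)$ by asserting $\ker\alpha=\{X:X|_L\in\Gamma(TL)\}\subseteq\lie{iso}(L)$, leaving implicit the injectivity of restriction; your $1$-jet argument (if $X|_L=0$ then $\nabla X|_p$ kills $T_pL$, kills $\xi$ via $\nabla_\xi X=-\Phi X$, and commutes with $\Phi$, hence kills $T_pL\oplus\Phi T_pL\oplus\R\xi=T_pM$, so $X\equiv0$) supplies exactly the missing justification. Your bookkeeping differs harmlessly: you bound the kernel of $X\mapsto f_X$ (fields with $X|_L\in TL\oplus\R\xi$ and constant $\eta(X)$) by $\frac12n(n+1)+1$ via the injection into $T_pL\oplus\lie{so}(T_pL)\oplus\R$, while the paper bounds the purely tangential kernel by $\dim\lie{so}(n+1)=\frac12n(n+1)$ and absorbs the constants into the $\R$-summand of $\Defo(L)$; both give the stated lower bound.
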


Moreover we prove, like in the sphere case although with totally different techniques, that if the lower bound is attained then the submanifold is totally geodesic together with a  rigidity result about the ambient $M$, in the case of a \emph{regular} Sasaki-Einstein manifold over a base K\"ahler manifold.

\begin{thmintro} If $M$ is a regular Sasaki-Einstein manifold and the multiplicity  of the eigenvalue $2n+2$ of $\Delta_L$ is exactly $\dim \lie g - \frac 12 n(n+1)-1$ then $M$ is a Sasaki-Einstein circle bundle over the complex projective space endowed with the Fubini-Study metric. In particular if $M$ is simply connected then $M = S^{2n+1}$.
\end{thmintro}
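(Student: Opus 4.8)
The plan is to pass to the Boothby--Wang quotient and translate the spectral hypothesis into a statement about the symmetries of the base. Since $M$ is regular, the Reeb flow is the free $S^1$-action of a circle bundle $\pi\colon M \to N$ over a compact K\"ahler--Einstein Fano manifold $(N,J,\omega)$ with $\dim_{\C} N = n$ and $d\eta = \pi^*\omega$. A minimal Legendrian $L$ is horizontal for $\eta$, so $\pi|_L\colon L \to \bar L := \pi(L)$ is a Riemannian covering onto a minimal Lagrangian submanifold $\bar L \subset N$, under which $\Delta_L$-eigenfunctions pulled back from $\bar L$ correspond to $\Delta_{\bar L}$-eigenfunctions with the same eigenvalue. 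Each $X \in \lie g$ projects to a holomorphic Killing field $\bar X$ of $N$ with Hamiltonian $h_{\bar X}$ (so $\iota_{\bar X}\omega = dh_{\bar X}$), the contact moment map descends to the symplectic one, and hence $\eta(X) = h_{\bar X}\circ\pi$ up to the constant coming from the $\xi$-component; in particular $\eta(\xi)\equiv 1$.

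Next I would identify the kernel of the eigenfunction construction of the first Theorem. Writing $W = \{X \in \lie g : \eta(X)|_L \text{ is constant}\}$, the image of $X \mapsto \eta(X) - \tfrac{1}{\vol(L)}\int_L \eta(X)\,dv$ inside the $(2n+2)$-eigenspace has dimension $\dim\lie g - \dim W$, so the multiplicity equals the lower bound exactly when $\dim W = \tfrac12 n(n+1)+1$ (and the image fills the whole eigenspace). Now $\eta(X)|_L$ is constant iff $dh_{\bar X}|_{\bar L}=0$, i.e.\ $\iota_{\bar X}\omega$ annihilates $T\bar L$; since $\bar L$ is Lagrangian we have $(T\bar L)^{\omega}=T\bar L$, so this holds iff $\bar X$ is tangent to $\bar L$ along $\bar L$. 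As the kernel of the projection of $\lie g$ to the automorphism algebra of $N$ is $\R\xi$, we obtain $\dim W = 1 + \dim\{X \in \lie g : \bar X \text{ tangent to } \bar L\}$, so the equality case reads $\dim\{\bar X \text{ tangent to } \bar L\} = \tfrac12 n(n+1)$.

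The restriction $\bar X \mapsto \bar X|_{\bar L}$ sends these fields into the space of Killing fields of $\bar L$, and it is injective: a holomorphic Killing field of $N$ vanishing on the maximal totally real submanifold $\bar L$ vanishes identically by the identity principle. Hence the isometry algebra of $\bar L$ has dimension $\ge \tfrac12 n(n+1)$, which is the maximum for an $n$-manifold, so $\bar L$ has constant sectional curvature; being compact with a connected $\tfrac12 n(n+1)$-dimensional isometry group forces the positive case, so $\bar L$ is a round $S^n$ or $\RP^n$. This is consistent with the value $2n+2$, which is exactly the first nonzero eigenvalue of $\RP^n$ with the curvature induced as a totally geodesic Lagrangian in $\CP^n$.

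It remains to upgrade the maximal symmetry of $\bar L$ to the rigidity of $N$, and this is where I expect the main difficulty. By the companion conclusion that equality forces $L$, hence $\bar L$, to be totally geodesic, the Gauss equation gives that $N$ has constant sectional curvature on every $2$-plane tangent to $\bar L$. The hard part is to propagate this datum, together with the K\"ahler--Einstein condition and the transitive $\lie{so}(n+1)$-action along $\bar L$, to the conclusion that $N$ has constant holomorphic sectional curvature, i.e.\ is a complex space form; equivalently, that $\bar L$ is the fixed locus of an anti-holomorphic isometric involution whose complexification is $N$. Since $N$ is Fano this space form must be $\CP^n$ with a Fubini--Study metric, and $\pi\colon M \to \CP^n$ is the asserted Sasaki--Einstein circle bundle. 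Finally, circle bundles over $\CP^n$ are classified by their Euler class in $H^2(\CP^n;\Z)=\Z$, and the total space is simply connected precisely for the generator, for which the bundle is the Hopf fibration $S^{2n+1}\to\CP^n$; thus a simply connected $M$ must be $S^{2n+1}$.
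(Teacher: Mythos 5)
Your first half is sound and is essentially the paper's own argument in different clothing: you pass to the Boothby--Wang quotient, identify the kernel of $X \mapsto f_X$ with ($\R\xi$ plus) the fields tangent to $\bar L$, and conclude from the equality case that the isometry algebra of $\bar L$ is $\frac12 n(n+1)$-dimensional, hence $\bar L$ is a round $S^n$ or $\RP^n$ (the paper does this via $\ker\alpha \subseteq \lie{iso}(L)$ and Kobayashi's maximal-isometry-group theorem). But the proof then breaks at exactly the point you flag as ``the main difficulty,'' in two ways. First, your appeal to ``the companion conclusion'' that equality forces $L$ to be totally geodesic is circular: in the paper, total geodesy is not an independent prior result but a \emph{consequence} of the classification of the base --- the pairs $(\bar L, B)$ that survive are symmetric subspaces, whence total geodesy --- i.e., it is an output of precisely the step you leave open. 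Second, and fatally, the propagation you sketch cannot work from the data you list: the complex quadric $Q_n$ is a K\"ahler--Einstein Fano manifold containing a totally geodesic Lagrangian round $S^n$ on which $\SO(n+1)$ acts transitively by holomorphic isometries, so it satisfies \emph{every} hypothesis in your final paragraph (Gauss equation, KE condition, maximal $\lie{so}(n+1)$-symmetry along $\bar L$) and yet does not have constant holomorphic sectional curvature. No local argument along $\bar L$ can distinguish $\CP^n$ from $Q_n$; some genuinely global input is required to exclude the quadric.

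The paper supplies that input in two stages that your proposal is missing. It observes that the isotropy of the homogeneous Lagrangian $\bar L$ acts transitively on the unit normal sphere (via the Lagrangian condition), so $K=\SO(n+1)$ acts on $B$ with cohomogeneity one; by the theory of Lagrangian orbits the complexified orbit $K^\C \cdot p$ is open, dense and Stein with complement of complex codimension one, so $B$ is a two-orbit K\"ahler manifold, and Ahiezer's classification then pins down $B \in \{Q_n, \CP^n\}$ (this is also where total geodesy of $\bar L$, hence of $L$, comes from). Finally $Q_n$ is ruled out not by curvature but by a spectral count: on the totally geodesic sphere in $Q_n$ the actual multiplicity of the eigenvalue $2n+2$ (computed by rescaling to the round Laplacian) exceeds the assumed exact value, since $\dim\lie{aut}(M) \leq \dim\lie{aut}(Q_n)+1 = \frac12(n+1)(n+2)+1 < \frac32 n(n+1)+1$ for $n>1$, and $Q_1=\CP^1$ handles $n=1$. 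Your concluding step (circle bundles over $\CP^n$ classified by Euler class, simple connectivity forcing the Hopf fibration $S^{2n+1}\to\CP^n$) is fine, but the heart of the theorem --- identifying the base and excluding the quadric --- is absent from the proposal.
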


Among the techniques we use we mention the theory of deformations of minimal Legendrian submanifolds, for which we refer to \cite{ono, ohnita} and, in the case of regular manifolds, the correspondence between Legendrian submanifolds of Sasakian manifolds and Lagrangian submanifolds of K\"ahler manifolds, see \cite{reckziegel}.

This result makes use of the geometry of Legendrian submanifolds of the K\"ahler-Einstein base, which exists by the regularity assumption. It would be interesting to drop this assumption and prove the result for quasi-regular or irregular Sasaki-Einstein manifolds. 

Then in Theorem \ref{thmeigenfunction} we provide a generalization of the family of eigenfunctions by making use of the immersion of the Sasaki-Einstein manifold $M$  into its Ricci-flat K\"ahler cone $C(M)$. This family is parameterized by the Lie algebra of the infinitesimal K\"ahler automorphisms of $C(M)$, which is in general bigger than the Sasaki automorphism group of $M$.
The family is defined by means of the Nomizu operator on $C(M)$. This time our arguments are similar to the ones of L\^e and Wang for the sphere and they rely on the Ricci-flatness of $C(M)$ and  properties of the Nomizu operator. 

It would be interesting to provide sufficient conditions for the Legendrianity of a minimal submanifold by means of any of these families of functions.

\begin{problem}
Let $M^{2n+1}$ be a Sasaki-Einstein manifold with big enough automorphism group $G$, let $L^n$ be a minimal submanifold such that for each $X \in \lie g$, the family of functions \eqref{eqetaX} or \eqref{equa:defnefK} are eigenfunctions of $\Delta_L$ with eigenvalue $2n+2$. Can we conclude that $L$ is Legendrian?
\end{problem}

Also, it would be interesting to relate the second family with the moment map of the symplectic action on $C(M)$ of its K\"ahler automorphism group.

The paper is organized as follows. In section \ref{sec:prelim} we recall some notions from Sasakian geometry, minimal Legendrian deformations and the contact moment map. In section \ref{sec:etaX} we introduce the first family of eigenfunctions and prove the main theorems. Finally in section \ref{sec:nomizu} we construct the functions via the Nomizu operator.

\subsection*{Acknowledgements}
The authors would like to thank Xiuxiong Chen for constant support and Fabio Podest\`a 
for suggesting the problem and his help and advice. We finally thank Anna Gori for useful discussions.

\section{Preliminaries} \label{sec:prelim}
We recall some notions from Sasakian geometry, minimal Legendrian submanifolds and their deformations.
\subsection{Sasakian manifolds}
In this paper we focus on the case where the contact manifold is a Sasakian manifold, i.e. there is a contact form $\eta$, its Reeb field $\xi$, a Riemannian metric $g$ and a $(1,1)$-tensor field $\Phi$ such that
\begin{equation*}
\begin{split}
\eta(\xi) = 1, \iota_\xi d\eta = 0 \\
\Phi^2 = - \id + \xi \otimes \eta \\
g(\Phi \cdot, \Phi \cdot) = g + \eta \otimes \eta \\
d\eta = g( \Phi \cdot, \cdot) \\
N_\Phi + \xi \otimes d\eta = 0
\end{split}
\end{equation*}
where $N_\Phi$ is the torsion of $\Phi$.

An equivalent formulation is to say that a Riemannian manifold $(M, g)$ is Sasakian if and only if its symplectization $C(M)=M \times \R^+$ with metric $\bar g = r^2 g + dr^2$ is a K\"ahler manifold, where $r$ is the coordinate on $\R^+ = (0, +\infty)$.

A Sasakian structure defines a \emph{transverse K\"ahler structure} on $M$, that it is defines a K\"ahler structure $(d\eta, \Phi|_D)$ on the contact subbundle $D = \ker \eta$. This K\"ahler metric is known as the \emph{transverse metric}.

The Reeb field $\xi$ is unitary and Killing and defines a foliation called \emph{characteristic foliation}. We call $M$ \emph{regular} Sasakian if the circle action defined by the characteristic foliation is free. It is known, see e.g. \cite{monoBG, blair} that every compact regular Sasakian manifold is a Riemannian submersion $\pi:M \rightarrow B$ over a compact K\"ahler manifold.
In the \emph{quasi-regular} case, i.e. when the circle action is locally free, we have an orbifold Riemannian submersion.

A Sasaki-Einstein metric is a Sasakian metric which is Einstein, i.e. its Ricci tensor is a multiple of the metric. By curvature properties of Sasakian metrics, see i.e. \cite{monoBG}, it follows that this constant is $2n$, where $2n+1$ is the dimension of $M$.
An $\eta$-Sasaki-Einstein metric is a Sasakian metric $g$ such that its Ricci tensor satisfies $\Ric = A g + (2n-A) \eta \otimes \eta$ for some constant $A$. The following proposition is well known.

\begin{prop}[\cite{sparks_survey}]
Let $(M, g)$ be a Sasakian manifold. Then $g$ is Sasaki-Einstein if and only if the transverse metric is K\"ahler-Einstein with constant $2n+2$, if and only if the K\"ahler cone $C(M)$ is Ricci-flat.
\end{prop}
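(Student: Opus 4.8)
The plan is to prove the two equivalences separately, in each case by comparing the Ricci tensor of $g$ against an auxiliary Ricci tensor: the transverse Ricci tensor $\Ric^T$ of the transverse Kähler metric $g^T = g|_D$ for the first equivalence, and the Ricci tensor $\overline{\Ric}$ of the cone metric $\bar g = r^2 g + dr^2$ for the second. The two comparisons rest, respectively, on the standard Sasakian curvature identities and on O'Neill's warped-product Ricci formulas.

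For the equivalence with the transverse condition, I would first use the Sasakian curvature identity $R(X,\xi)\xi = X - \eta(X)\xi$ to control $\Ric$ in the Reeb direction: tracing over a local orthonormal frame $\{\xi, e_1, \dots, e_{2n}\}$ adapted to the splitting $TM = \R\xi \oplus D$ gives $\Ric(\xi, \cdot) = 2n\,\eta$. Thus $\xi$ is automatically an eigendirection of $\Ric$ with eigenvalue $2n$ and is $\Ric$-orthogonal to $D$, so the Einstein condition $\Ric = 2n\, g$ is equivalent to its restriction $\Ric|_D = 2n\, g|_D$. I would then invoke the transverse Ricci identity $\Ric(X,Y) = \Ric^T(X,Y) - 2\,g(X,Y)$ for $X, Y \in D$, which relates the ambient Ricci curvature to the transverse one; it is obtained by comparing the transverse Levi-Civita connection with that of $g$, equivalently from O'Neill's formulas for the local Riemannian submersion onto the leaf space, whose fibres are totally geodesic because the Reeb orbits are geodesics ($\nabla_\xi\xi = 0$). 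Substituting, $\Ric|_D = 2n\, g|_D$ holds if and only if $\Ric^T = (2n+2)\,g^T$, which is exactly the statement that the transverse metric is Kähler-Einstein with constant $2n+2$.

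For the equivalence with Ricci-flatness of the cone, I would exhibit $(C(M), \bar g)$ as a warped product over $\R^+$ with warping function $f(r) = r$ and fibre $(M^{2n+1}, g)$, and apply O'Neill's warped-product Ricci formulas. Because $f'' = 0$, both the $\partial_r$-components and the mixed components of $\overline{\Ric}$ vanish identically, while the fibre-fibre component reduces to $\overline{\Ric}(X,Y) = \Ric(X,Y) - (\dim M - 1)\,g(X,Y) = \Ric(X,Y) - 2n\, g(X,Y)$. Hence $\overline{\Ric} \equiv 0$ if and only if $\Ric = 2n\, g$, i.e. if and only if $g$ is Sasaki-Einstein; chaining this with the first equivalence yields the proposition.

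The one genuine computation is the transverse Ricci identity $\Ric(X,Y) = \Ric^T(X,Y) - 2\,g(X,Y)$, and I expect this to be the main obstacle: it requires carefully isolating the correction terms coming from the curvature of the characteristic foliation, and the precise constant $2$ is exactly what shifts the Einstein constant from $2n$ on $M$ to $2n+2$ on the transverse Kähler metric. Keeping the normalisation conventions for $\eta$, $\Phi$ and $d\eta$ consistent throughout is essential, since an error there would propagate into a wrong transverse constant. The cone computation, by contrast, is a mechanical application of the warped-product formulas once the identifications $f(r) = r$ and $\dim M = 2n+1$ are in place.
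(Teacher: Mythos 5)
Your proof is correct and is essentially the standard argument that the paper implicitly relies on by citing \cite{sparks_survey} (the paper states the proposition as well known, without giving a proof): the Reeb-direction identity $\Ric(\xi,\cdot)=2n\,\eta$ forced by $R(X,\xi)\xi = X - \eta(X)\xi$, the transverse relation $\Ric^T = \Ric|_{D\times D} + 2\,g^T$, and the warped-product Ricci formulas for $\bar g = dr^2 + r^2 g$ with fibre dimension $2n+1$, all with the correct constants. One harmless slip: the mixed components $\overline{\Ric}(\de_r, X)$ vanish for \emph{any} warped product over a one-dimensional base, not because $f''=0$; the condition $f''=0$ is responsible only for $\overline{\Ric}(\de_r,\de_r)=0$.
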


\subsection{Legendrian immersions and their deformations}

We will consider some special submanifolds of Sasakian manifolds, known as \emph{Legendrian} (or \emph{horizontal}), see \cite{reckziegel}.

A Legendrian submanifold of a $(2n+1)$-dimensional contact manifold $(M, \eta)$ is an $n$-dimensional submanifold $i: L \rightarrow M $ such that for all $p \in L$ we have $i_*(T_p L) \subseteq \ker \eta_{i(p)}$.

We will consider Legendrian submanifolds which are also \emph{minimal} in the sense of Riemannian geometry, i.e. their mean curvature field vanishes. 

If we have a Legendrian submanifold $L$ in a Sasakian manifold we can identify the space of sections of the normal bundle $NL$ with $C^\infty(L) \oplus \Omega^1(L)$ via the isomorphism
\begin{align*}
\chi	: \Gamma(NL)  &\longrightarrow 	 C^\infty(L) \oplus \Omega^1(L)\\
	 V			 &\longmapsto 		\biggl ( \eta(V), -\frac 1 2 i^* (\iota_V d\eta) \biggr )
\end{align*}
see \cite{ono}.

In the case of a compact regular Sasakian manifold $M$ with contact structure $\eta$ that fibers over a compact K\"ahler manifold $(B, \omega)$ we can take the projection $\pi(L) \subseteq B$ of a Legendrian $L$.
Following Reckziegel \cite{reckziegel} we have that $\pi(L)$ is a Lagrangian submanifold of $B$, i.e. $(\pi \circ i)^* \omega = 0$ and is finitely covered by $L$. 

Conversely, given a Lagrangian submanifold $j:N \rightarrow B$, a point $q \in N$, for any choice of $p$ in the fiber of $q$ there exists a neighborhood $U$ of $q$ and a Legendrian immersion $i: U \rightarrow M$ such that $\pi \circ i = j|_U$.

Moreover, Riemannian properties of $L$ hold as well for $\pi(L)$ and conversely. Namely we have the following.
\begin{prop} [\cite{reckziegel}] \label{prop:reckziegel}
The Legendrian $L$ is minimal, or totally geodesic, if and only if the Lagrangian $\pi(L)$ is.
\end{prop}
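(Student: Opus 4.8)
The plan is to exploit the Riemannian submersion $\pi\colon M\to B$ directly. Its vertical distribution is spanned by the Reeb field $\xi$ and its horizontal distribution is exactly the contact subbundle $D=\ker\eta$; since $L$ is Legendrian we have $TL\subseteq D$, so $L$ is a horizontal submanifold and $\pi|_L\colon L\to\pi(L)$ is a local isometry. Under this identification the transverse complex structure $\Phi|_D$ descends to the complex structure $J$ of $B$, i.e. $\pi_*\circ\Phi=J\circ\pi_*$ on horizontal vectors. Since $g(\Phi U,V)=d\eta(U,V)=0$ for $U,V\in TL$, the subbundle $\Phi(TL)$ is orthogonal to $TL$, so that $D=TL\oplus\Phi(TL)$ and the normal bundle of $L$ in $M$ splits as $NL=\R\xi\oplus\Phi(TL)$, with $\xi$ vertical and $\Phi(TL)$ horizontal of rank $n$. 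The strategy is to show that the second fundamental form $\mathrm{II}_L$ has no $\xi$-component and that its horizontal part corresponds, via $\pi_*$, to the second fundamental form $\mathrm{II}_{\pi(L)}$ of the Lagrangian $\pi(L)$ in $B$.

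First I would compute the $\xi$-component of the second fundamental form. For $U,V$ tangent to $L$, differentiating $g(V,\xi)=\eta(V)=0$ and using the standard Sasakian identity $\nabla_U\xi=-\Phi U$ gives
\[
g(\nabla_U V,\xi)=-g(V,\nabla_U\xi)=g(V,\Phi U)=d\eta(U,V)=0,
\]
where the last equality holds because $i^*\eta=0$ forces $i^*d\eta=0$. Hence $\mathrm{II}_L$ takes values in the horizontal part $\Phi(TL)$ of the normal bundle; equivalently, O'Neill's integrability tensor $\mathcal V(\nabla_U V)$ vanishes identically along $TL$, precisely because $L$ is Legendrian.

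Next I would invoke O'Neill's horizontality formula: for horizontal fields $U,V$ extending tangent fields of $L$ and $\pi$-related to $\bar U,\bar V$ on $B$, the horizontal projection $\mathcal H(\nabla_U V)$ is the horizontal lift of $\bnabla_{\bar U}\bar V$. By the previous step $\nabla_U V$ is already horizontal, so $\pi_*(\nabla_U V)=\bnabla_{\bar U}\bar V$. Splitting both sides through the Gauss formula, and using that $\pi|_L$ is a local isometry (so that $\pi_*$ intertwines the induced Levi-Civita connections and $\pi_*(\Phi(TL))=J(T\pi(L))=N\pi(L)$), the tangential and normal parts match to give
\[
\pi_*\bigl(\mathrm{II}_L(U,V)\bigr)=\mathrm{II}_{\pi(L)}(\bar U,\bar V).
\]
Since $\pi_*$ restricts to a linear isometry on horizontal vectors, this identification is exact, and tracing it yields $\pi_*H_L=H_{\pi(L)}$.

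The conclusion is then immediate: $\mathrm{II}_L\equiv 0$ if and only if $\mathrm{II}_{\pi(L)}\equiv 0$ (totally geodesic), and $H_L=0$ if and only if $H_{\pi(L)}=0$ (minimal). The main technical point is the bookkeeping of O'Neill's tensors and the verification that the vertical correction contributes nothing along $TL$; this is exactly what the Legendrian condition $d\eta|_{TL}=0$ guarantees, and it is the step where the contact geometry enters essentially, rather than through the generic behaviour of a horizontal submanifold of a Riemannian submersion.
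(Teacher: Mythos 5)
Your proof is correct. Note that there is no internal proof to compare against: the paper imports this proposition wholesale from Reckziegel \cite{reckziegel}, and your O'Neill-style submersion computation is essentially the standard argument behind that citation. The two points you isolate are exactly the right ones: first, the vertical component of the second fundamental form vanishes, since $g(\nabla_U V,\xi)=-g(V,\nabla_U\xi)=\pm\, d\eta(U,V)=0$ on a Legendrian because $i^*d\eta=d(i^*\eta)=0$ (this holds whichever sign convention one takes for $\nabla_U\xi=\pm\Phi U$, so the convention ambiguity is harmless); second, since $\nabla_U V$ is then horizontal along $L$, O'Neill's formula for basic fields gives $\pi_*(\nabla_U V)=\nabla^B_{\bar U}\bar V$, and because $\pi_*$ is a linear isometry on the horizontal distribution carrying the splitting $TL\oplus\Phi(TL)$ to $T\pi(L)\oplus J(T\pi(L))=T\pi(L)\oplus N\pi(L)$, the Gauss decompositions match term by term, yielding $\pi_*(\mathrm{II}_L(U,V))=\mathrm{II}_{\pi(L)}(\bar U,\bar V)$ and, after tracing, $\pi_* H_L=H_{\pi(L)}$. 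One small point worth making explicit: when invoking O'Neill's formula you extend vectors tangent to $L$ to basic fields near $L$, and you should note that $\nabla_U V|_L$ is independent of the chosen extension (the second fundamental form is tensorial), so evaluating with basic extensions is legitimate; this is routine and does not affect correctness. Your argument also silently uses the regularity of $M$ (so that the Boothby--Wang fibration $\pi$ exists and $\Phi|_D$ descends to $J$), which is precisely the setting in which the paper applies the proposition.
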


A smooth family of minimal Legendrian immersions $i_t: L \rightarrow M$ is a family of maps $F: [0,1] \times L \rightarrow M$ such that for each $t$ the map $i_t= F(t, \cdot) : L \rightarrow M$ is a minimal Legendrian immersion. 
Every smooth family points out a vector field $W_t$  on $L$ given at $p$ by
\[
W_t |_p =  F_* \biggl( \frac \de {\de t} \biggr |_{(t, p)} \biggr ).
\]
It is known, e.g. \cite{ohnita,ono}, that a family of immersions is Legendrian if and only if the normal component $V_t$ of $W_t$ satisfies
\begin{equation} \label{infleg}
V_t = \chi^{-1} \biggl (\eta(V_t), \frac 1 2 d \eta(V_t) \biggr ),
 \end{equation}
 i.e. $d\eta(V_t) = - i^* (\iota_{V_t} d\eta)$. Normal fields satisfying \eqref{infleg} are called \emph{infinitesimal Legendrian deformations}.

We are interested in minimal Legendrian deformations of a Legendrian $i: L \rightarrow M$, that are smooth families $i_t: L \rightarrow M$ of minimal Legendrian immersions  such that $i_0=i$.

A trivial family of deformations of a minimal Legendrian submanifolds is given by one-parameter families of ambient transformations. We will denote by $\Aut(M)$ the group of such transformations, i.e. diffeomorphisms $M \rightarrow M$ which are isometric contactomorphisms.

If we let $\phi_t \in \Aut(M)$ be one of such families. Then $i_t = \phi_t |_{i(L)}  : i(L) \rightarrow M$ is a minimal Legendrian deformation, see \cite{ohnita}.

In particular, the normal component of every field in the Lie algebra $\lie{aut}(M)$ of $\Aut(M)$ defines an infinitesimal Legendrian deformation. This is also minimal as we are taking the normal component of a Killing vector field, see \cite[Sec. 3]{simons}.

When we restrict ourselves to $\eta$-Sasaki-Einstein manifolds with constant $A$, we have a characterization of the space of infinitesimal minimal Legendrian deformations.
\begin{prop}[\cite{ohnita}]
Let $i:L \rightarrow M$ be a minimal Legendrian submanifolds in an $\eta$-Sasaki-Einstein manifold with constant $A$. Then the vector space of infinitesimal minimal Legendrian deformations is identified with
\[
\Defo(L) = \R \oplus \{ f \in C^\infty(L): \Delta_L f = (A+2)f \}
\]
where $\Delta_L$ denotes the Laplacian of $L$ with the induced metric.
\end{prop}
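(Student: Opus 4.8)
The plan is to reduce the problem to a scalar PDE in two stages: first parametrize infinitesimal Legendrian deformations by functions, then impose minimality as a linear condition on those functions. First I would make the identification behind \eqref{infleg} explicit. Since $L$ is Legendrian, $\xi$ is $g$-orthogonal to $TL$ (as $g(\xi,X)=\eta(X)=0$) and $\Phi(TL)$ is orthogonal to $TL$, so $\xi$ and $\Phi(TL)$ span the rank-$(n+1)$ normal bundle and every normal field decomposes as $V=f\xi+\Phi W$ with $f\in C^\infty(L)$ and $W\in\Gamma(TL)$. A direct computation with the isomorphism $\chi$ gives $\eta(V)=f$ and, using $\Phi V=-W$ together with $d\eta=g(\Phi\cdot,\cdot)$, the identity $-\tfrac12 i^*(\iota_V d\eta)=\tfrac12 W^\flat$. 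The condition \eqref{infleg} then reads $W=\grad_L f$, so that $f\mapsto V_f:=f\xi+\Phi(\grad_L f)$ is a linear isomorphism from $C^\infty(L)$ onto the space of infinitesimal Legendrian deformations. The question becomes: for which $f$ is $V_f$ minimal to first order?

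Next I would observe that the mean curvature of any Legendrian submanifold is $\xi$-free. From the Sasakian identity $\bnabla_X\xi=-\Phi X$ one finds $\eta(II(X,Y))=d\eta(X,Y)=0$ on $TL$, so the second fundamental form, and hence $H$, take values in $\Phi(TL)$; thus $H$ is encoded by the mean-curvature one-form $\sigma_H(X):=g(H,\Phi X)$. Minimality of every slice $L_t$ is equivalent to $\sigma_{H_t}\equiv 0$, and since $L=L_0$ is already minimal, $V_f$ is an infinitesimal \emph{minimal} Legendrian deformation precisely when the linearization $\mathcal D f:=\tfrac{d}{dt}\big|_{0}\sigma_{H_t}$ vanishes, where $\mathcal D\colon C^\infty(L)\to\Omega^1(L)$ is a linear second-order operator. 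Because we have already constrained $V_f$ to be Legendrian, the $\xi$-component of the linearized mean curvature vanishes automatically, so no information is lost in passing from $\delta H$ to $\mathcal D f$.

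The heart of the proof, and the step I expect to be the main obstacle, is to show that
\[
\mathcal D f = d\bigl(\Delta_L f-(A+2)f\bigr)
\]
up to a nonzero constant. To establish this I would differentiate $\sigma_{H_t}$ along the flow of $V_f$, using three ingredients: the Sasakian structure and curvature identities, which feed the Codazzi equation and bring in the Einstein term $\Ric=Ag+(2n-A)\eta\otimes\eta$; the total symmetry of the cubic form $C(X,Y,Z)=g(II(X,Y),\Phi Z)$ for a Legendrian submanifold, which lets the divergence terms reorganize into $d\Delta_L f$; and the minimality $H_0=0$, which kills the quadratic-in-$II$ contributions that would otherwise obstruct the linearization. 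The delicate point is the Weitzenböck-type bookkeeping that collapses the full Jacobi operator to the scalar Laplacian and fixes the constant as exactly $A+2$ (equal to the transverse Kähler-Einstein constant $2n+2$ in the genuine Sasaki-Einstein case); it is here that the $\eta$-Sasaki-Einstein hypothesis is essential.

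Finally I would read off the kernel. From the displayed formula, $\mathcal D f=0$ holds iff $\Delta_L f-(A+2)f$ is constant; writing $f=\bar f+f_0$ with $f_0$ of zero mean and integrating over $L$ forces that constant to be $-(A+2)\bar f$ and yields $\Delta_L f_0=(A+2)f_0$. Hence $\ker\mathcal D=\R\oplus\{f\in C^\infty(L):\Delta_L f=(A+2)f\}$, where the $\R$-summand consists of the constants. These correspond to the Reeb deformations $V=\mathrm{const}\cdot\xi$, which are the restriction to $L$ of the ambient one-parameter group of isometric contactomorphisms generated by $\xi$ and are therefore automatically minimal Legendrian; the eigenfunctions account for the remaining non-trivial deformations. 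This is precisely the asserted description of $\Defo(L)$. As a consistency check in the regular case, the same conclusion follows from the second-variation theory of minimal Lagrangian submanifolds of the Kähler-Einstein base through the Legendrian–Lagrangian correspondence of Proposition \ref{prop:reckziegel}.
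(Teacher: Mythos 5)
Your proposal is correct, and at its core it reconstructs the very argument that the paper does not spell out but delegates to Ohnita: the paper's own ``proof'' is the one-sentence remark that one combines the identification of infinitesimal Legendrian deformations with pairs $(f,\tfrac12 df)$ --- which you make explicit, correctly, as $V_f=f\xi+\Phi(\grad f)$ --- with the characterization of infinitesimal minimal deformations as the kernel of the Jacobi operator $\mathcal J$, citing \cite{simons} and \cite{ohnita} for the computation that this kernel condition becomes $\Delta_L f-(A+2)f=\const$. Where you genuinely diverge is the second step: instead of intersecting with $\ker\mathcal J$ on the normal bundle, you linearize the mean curvature one-form $\sigma_H=g(H,\Phi\,\cdot)$, after rightly observing that $\eta(\mathrm{II}(X,Y))=d\eta(X,Y)=0$ on $TL$ (since $i^*\eta=0$ forces $i^*d\eta=0$), so that nothing is lost in passing from $\delta H$ to the scalar operator $\mathcal D$. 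This Lagrangian-style route (in the spirit of the second-variation theory for minimal Lagrangians, which you invoke as a consistency check via Proposition \ref{prop:reckziegel}) buys a transparent reading of the answer: the eigenvalue $A+2$, and the $\R$-summand as the constants $f\equiv c$ corresponding to the Reeb deformations $c\xi$, both fall out of your integration argument; by contrast, the paper's citation route gets from \cite{simons}, for free, the fact (reused in Theorem \ref{thm:eigenf_etaX}) that normal parts of Killing fields lie in $\ker\mathcal J$. Two caveats. First, the identity $\mathcal D f=d\bigl(\Delta_L f-(A+2)f\bigr)$, which you correctly single out as the heart, is only sketched in your attempt (Codazzi plus the Einstein term, total symmetry of $g(\mathrm{II}(X,Y),\Phi Z)$, and $H_0=0$ are indeed the right ingredients); since this is exactly the content the paper imports wholesale from \cite{ohnita}, your write-up sits at the same level of detail as the paper's, not below it, but it is not a complete proof on its own. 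Second, both your final step (solving the constant as $-(A+2)\bar f$ and getting a \emph{direct} sum $\R\oplus E_{A+2}$) and the statement itself implicitly assume $A+2\neq 0$; it is worth making that hypothesis explicit.
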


This result is obtained by combining the copy of the space of infinitesimal Legendrian deformations of $L$ given by
\[
\biggl \{ \biggl (f, \frac 1 2 df \biggr ): f \in C^\infty(L) \biggr \}
\]
and the space of minimal deformation given by the kernel of the Jacobi operator $\call J$, for which we refer to \cite{simons}.

\subsection{Contact moment maps} \label{subsec:moment}
We finally recall the notion of contact moment map, we follow \cite[Sec.~8.4.2]{monoBG}.
In our setting the group $G = \Aut(M)$ is a compact group acting on $M$. We can extend this action to the symplectic cone $(C(M), d(r^2 \eta))$ by requiring that it leaves the $\{r = \const \}$ levels unchanged, i.e. the action is given by $g (p, r) = (gp, r)$.
Being $G$ a contactomorphism group it is easy to see that the action on $C(M)$ is by symplectomorphisms and, being the symplectic form on the cone exact, this action is Hamiltonian.
So there exists a map $\phi: C(M) \rightarrow \lie g^*$, such that
\[
d (\phi(X)) = - \iota_X d (r^2 \eta) = d (r^2 \eta(X)).
\]
Hence, up to a constant, one can take the map $\phi(p, r) (X) = r^2 \eta_p(X)$.
Seeing $M$ as the $\{r=1 \}$ level set, we consider the restriction $\mu: M \rightarrow \lie g^*$ of $\phi$ which we call the \emph{contact moment map} for the $G$-action on $M$.

\section{Eigenfunctions using the contact moment map} \label{sec:etaX}
In this section we construct one possible generalization of the functions given by L\^e-Wang \cite{lewang}.
We briefly recall their setting. They consider the standard Sasakian sphere $S^{2n+1}$ immersed in its K\"ahler cone $\C^{n+1} \backslash \{0 \}$ with respectively the round metric $g$ and the Euclidean metric $\langle \cdot, \cdot \rangle$.
It is known that the both the Sasaki transformation group of the sphere and the K\"ahler automorphism group of the cone is $G=\U(n+1)$. 
Let $M \in \lie u(n+1)$. Then the moment map for the $G$-action on the cone is given, up to a constant, by
\[
\phi(p, r)(M) = r^2 \eta_p(M_p) = r^2 g(\xi_p, M_p) =  \langle \xi_p, M_p \rangle
\]
We see an infinitesimal Sasaki automorphism $M \in \lie u(n+1)$ as a linear vector field whose value at $x \in S^{2n+1}$ is $Mx$. Then, using that $\xi$ at $x$ is $Jx$, where $J$ is the standard complex structure, the contact moment map $\mu: S^{2n+1} \rightarrow \lie u(n+1)^*$ is given by
\[
\mu(x)(M) = \langle Mx, Jx \rangle
\]
which is exactly the function of L\^e-Wang.

Back to the general setting of the Sasaki group $G= \Aut(M)$ with Lie algebra $\lie g$ acting on the $\eta$-Sasaki-Einstein $M$, we have the contact moment map that is given by $\mu(p)(X) = \eta_p(X_p)$.

We then consider for each $X \in \lie g$ the map $p \mapsto \eta(X)$ restricted to a minimal Legendrian submanifold and up to a constant. 

We prove the generalization of one of the implications of \cite[Thm. ~1.1]{lewang}.

\begin{thm} \label{thm:eigenf_etaX}
Let $(M, g, \eta, \xi)$ be a $(2n+1)$-dimensional $\eta$-Sasaki-Einstein manifold with $\Ric = A g + (2n-A) \eta \otimes \eta$ and let $L^n \subset M$ be 
a minimal Legendrian submanifold. 
Then for all $X \in \lie{aut}(M)$ the function on $L$ given by 
\begin{equation} \label{eqetaX}
f_X = \eta(X) - \frac 1 {\vol(L)} \int_L \eta(X)dv,
\end{equation}
where $dv$ is the volume form on $L$ of the induced metric, is en eigenfunction of the Laplacian $\Delta_L$ on $L$ with eigenvalue $A+2$. Moreover this eigenspace has dimension $\geq \dim \lie{aut}(M) - \frac 1 2 n(n+1)-1$.
\end{thm}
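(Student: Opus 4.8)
The plan is to read off the eigenvalue from the deformation theory recalled above, and then to package the dimension estimate around the linear map $\Psi\colon X\mapsto f_X$.

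First I would settle the eigenvalue claim. Fix $X\in\lie{aut}(M)$ and split $X|_L=X^\top+X^\perp$ into its tangent and normal parts along $L$. Since $L$ is Legendrian, $TL\subset\ker\eta$, so $\eta(X)=\eta(X^\perp)$ along $L$. As recalled before the statement, the normal part $X^\perp$ of a Killing field preserving the contact structure is an infinitesimal minimal Legendrian deformation, so by Ohnita's Proposition its associated function $\eta(X^\perp)=\eta(X)$ lies in $\Defo(L)=\R\oplus\{f:\Delta_Lf=(A+2)f\}$. Thus $\eta(X)=c+h$ on $L$ with $c$ constant and $\Delta_Lh=(A+2)h$. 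Because $A+2\neq0$, integrating the eigenvalue equation over $L$ gives $\int_Lh\,dv=0$, so the mean value of $\eta(X)$ over $L$ equals $c$ and therefore $f_X=h$. Hence $\Delta_Lf_X=(A+2)f_X$, as claimed.

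For the dimension bound, note that $\Psi\colon\lie{aut}(M)\to E$, $X\mapsto f_X$, is linear into the $(A+2)$-eigenspace $E$, so $\dim E\ge\dim\Psi(\lie{aut}(M))=\dim\lie{aut}(M)-\dim\ker\Psi$; it suffices to prove $\dim\ker\Psi\le\tfrac12 n(n+1)+1$. Now $X\in\ker\Psi$ means $\eta(X)\equiv c$ is constant on $L$. From $\lieder_X\eta=0$, i.e. $\iota_Xd\eta=-d(\eta(X))$, the constancy of $\eta(X)$ on $L$ gives $d\eta(X,Y)=g(\Phi X,Y)=0$ for all $Y\in TL$, so $\Phi X\perp TL$ along $L$. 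Writing $NL=\R\xi\oplus\Phi(TL)$ (valid for any Legendrian $L$, since $\Phi(TL)\subset D$ is the orthogonal complement of $TL$ in $D$) and $X^\perp=c\xi+\Phi W$ with $W\in TL$, the identity $\Phi^2=-\id+\xi\otimes\eta$ together with $\eta|_{TL}=0$ shows $\Phi X=\Phi X^\top-W$, whose $TL$-component is $-W$; hence $W=0$ and $X^\perp=c\xi$. Consequently $\ker\Psi=\lie k\oplus\R\xi$, where $\lie k=\{X\in\lie{aut}(M):X|_L\in\Gamma(TL)\}$.

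It remains to bound $\dim\lie k$. Since a Sasaki automorphism tangent to $L$ generates a flow of isometries of $M$ preserving $L$, restriction defines a linear map $\rho\colon\lie k\to\lie{isom}(L,g|_L)$, and the heart of the argument is that $\rho$ is injective. Suppose $X\in\lie k$ with $X|_L=0$, and fix $p\in L$. A Killing field is determined by the pair $(X_p,A_X)$ with $A_X:=\nabla X|_p$ skew-symmetric; here $X_p=0$ and, as $X$ vanishes identically along $L$, $A_X$ annihilates $T_pL$. Evaluating $\lieder_X\xi=0$ and $\lieder_X\Phi=0$ at $p$ (where $X_p=0$) yields $A_X\xi_p=0$ and $[A_X,\Phi]=0$ at $p$, whence $A_X(\Phi Y)=\Phi(A_XY)=0$ for $Y\in T_pL$. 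Since $T_pM=T_pL\oplus\R\xi_p\oplus\Phi_p(T_pL)$, we conclude $A_X=0$, and a Killing field with vanishing $1$-jet at $p$ vanishes on the connected $M$. Thus $\rho$ is injective and $\dim\lie k\le\dim\lie{isom}(L)\le\tfrac12 n(n+1)$, giving $\dim\ker\Psi\le\tfrac12 n(n+1)+1$ and the asserted lower bound.

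I expect the injectivity of $\rho$ to be the main obstacle: it is exactly the step that upgrades ``$X$ is tangent to $L$'' into rigidity, and it uses in an essential way that elements of $\lie{aut}(M)$ preserve $\Phi$ and $\xi$, not only the metric $g$. The eigenvalue computation, by contrast, is a direct consequence of Ohnita's identification once one observes that $\eta(X)=\eta(X^\perp)$ on a Legendrian $L$.
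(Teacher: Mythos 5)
Your proof is correct, and its skeleton is the paper's: both arguments feed the normal part of $X\in\lie{aut}(M)$ into Ohnita's identification $\Defo(L)=\R\oplus\{f:\Delta_Lf=(A+2)f\}$ to get the eigenvalue, and both bound the eigenspace from below by estimating the kernel of a linear map out of $\lie{aut}(M)$ against $\dim\lie{so}(n+1)$. The bookkeeping differs in an instructive way. The paper uses $\alpha(X)=\chi(X_2)\in\Defo(L)$, so $\ker\alpha=\{X:X|_L\in\Gamma(TL)\}$ immediately (as $\chi$ is an isomorphism), and the extra $+1$ in the bound comes from the $\R$-summand of the target; you instead map $\Psi\colon X\mapsto f_X$ directly into the eigenspace, which forces you to prove the nontrivial kernel characterization $\ker\Psi=\lie k\oplus\R\xi$ — your computation that $\eta(X)\equiv\const$ on $L$ forces $\Phi X\perp TL$ and hence $X^\perp=c\xi$, via $NL=\R\xi\oplus\Phi(TL)$, is exactly the content that the isomorphism $\chi$ packages away in the paper. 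Your second refinement is a genuine improvement in rigor: the paper simply writes $\ker\alpha\subseteq\lie{iso}(L)$, tacitly assuming the restriction map is injective, whereas a Killing field of the metric alone \emph{can} vanish along an $n$-dimensional submanifold of $M^{2n+1}$ without vanishing on $M$; your $1$-jet argument, using $\lieder_X\xi=0$ and $\lieder_X\Phi=0$ to kill the Nomizu operator on $T_pL\oplus\R\xi_p\oplus\Phi(T_pL)$, is precisely what makes that inclusion legitimate and correctly isolates where the full Sasaki automorphism condition (not mere isometry) is used. One small caveat shared with the paper: your integration step assumes $A+2\neq0$, which fails for null $\eta$-Einstein structures ($A=-2$); there $f_X$ degenerates to $0$ and the statement is vacuous, so nothing breaks, but a parenthetical would not hurt.
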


\begin{proof}
We recalled above that the map $\chi: \Gamma(NL) \rightarrow C^\infty(L) \oplus \Omega^1(L)$ given by $\chi(V) = (\eta(V), -\frac 1 2 \iota_V d\eta)$ 
is an isomorphism if $L$ is Legendrian and that the space of infinitesimal deformations of a minimal Legendrian $L$ is
\[
\Defo(L) = \R \oplus \{ f \in C^\infty(L): \Delta_L f =  (A+2) f \}.
\]

Let $X \in \lie g = \lie{aut}(M)$ and let $X|_L = X_1 + X_2 \in \Gamma(TL) \oplus \Gamma(NL)$ be its decomposition.

From \cite{ohnita} it follows that $X_2$ defines a Legendrian deformation of $L$ and it is known, e.g. \cite{simons}, 
that the normal part of a Killing field defines an infinitesimal minimal deformation. Hence $\chi(X_2) \in \chi(\ker \call J)$, where $\call J$ 
denotes the Jacobi operator, and so, following Ohnita \cite{ohnita} we have
\[
\Delta_L f - (A+2) f = \const = C.
\]
and the pair $(C, f- \bar f) \in \R \oplus \{ f \in C^\infty(L): \Delta_L f =  (A+2) f \}$, where $\bar f =\frac 1 {\vol(L)} \int_L \eta(X)dv$.  
So the first claim follows.

Every $X \in \lie g = \lie{aut}(M)$ defines a trivial deformation of $L$, hence there is a linear map $\alpha: \lie g \rightarrow \Defo(L)$ 
given by $\alpha(X) = \chi(X_2)$. 

Its kernel is $\ker \alpha = \{ X \in \lie g: X|_L \in \Gamma(TL) \} \subseteq \lie{iso}(L)$. So we have

\begin{align} \label{so}
1 + \dim E_{A+2}	& \geq \dim \alpha(\lie g) \\ \nonumber
				& = \dim \lie g - \dim \ker \alpha \\ \nonumber
				& \geq \dim \lie g - \dim \lie{so}(n+1) \\
				&= \dim \lie g - \frac{n(n+1)}{2}. \nonumber
\end{align}

So we have the second claim in the statement.
\end{proof}

Let us specialize to Sasaki-Einstein manifolds and  assume that  $M$ is regular, so it is a principal circle bundle $\pi: M \rightarrow B$ over a K\"ahler-Einstein 
base manifold $B$ and consider the case when the equality holds in the previous theorem. We prove the following, generalizing \cite[Thm.~1.2]{lewang} together with a rigidity result.

\begin{thm}\label{prop:lowerbound}
If $M$ is regular and the eigenvalue $2n+2$ of $\Delta_L$ has multiplicity exactly $ \dim \lie{aut}(M) - \frac 1 2 n(n+1)-1$ then $L$ is totally geodesic in $M$ and $M$ is a principal circle bundle over the complex projective space.
\end{thm}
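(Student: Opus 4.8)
The plan is to exploit the chain of inequalities \eqref{so} from the proof of Theorem \ref{thm:eigenf_etaX}: assuming that the multiplicity of $2n+2$ equals the stated lower bound forces \emph{every} inequality in \eqref{so} to be an equality. In particular $\dim\ker\alpha=\tfrac{n(n+1)}2=\dim\lie{so}(n+1)$; since $\ker\alpha\subseteq\lie{iso}(L)$ while $\dim\lie{iso}(L)\le\tfrac{n(n+1)}2$ always holds, I would first conclude $\ker\alpha=\lie{iso}(L)$ and that the isometry group of the compact manifold $L$ attains its maximal possible dimension. By the classical classification of maximally symmetric Riemannian manifolds this identifies $L$, up to scale, with the round sphere $S^n$ or with $\RP^n$; in particular $L$ has constant positive curvature and $\mathrm{Iso}(L)^0$ acts with isotropy $\SO(n)$ represented standardly on each $T_pL$ (the case $n=1$ being trivial, as a minimal curve is a geodesic).

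Next I would deduce that $L$ is totally geodesic by a representation-theoretic argument on the second fundamental form $h$. Every element of $\ker\alpha$ is the restriction of an ambient Sasaki automorphism tangent to $L$; these are isometries of $M$ preserving $L$ together with the structure tensors $\xi,\eta,\Phi$, and hence they preserve $h$. Thus $h$ is $\SO(n)$-equivariant as a map $S^2T_pL\to N_pL$. Decomposing $S^2T_pL=\R g_p\oplus S^2_0T_pL$ and, using that $L$ is Legendrian, $N_pL=\R\xi_p\oplus\Phi(T_pL)$ as $\SO(n)$-modules — the $\R g_p$ and $\R\xi_p$ factors trivial, and $S^2_0T_pL$ as well as $\Phi(T_pL)\cong T_pL$ nontrivial — minimality kills the image of the trace line $\R g_p$ (which is the mean curvature direction), while Schur's lemma kills any equivariant map out of the irreducible $S^2_0T_pL$, since the target $\R\xi_p\oplus\Phi(T_pL)$ contains no copy of $S^2_0T_pL$ (for $n\ge2$ these are pairwise inequivalent irreducibles). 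Hence $h\equiv0$ and $L$ is totally geodesic.

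I would then pass to the K\"ahler base. By regularity $M$ fibers as $\pi\colon M\to B$ over a compact K\"ahler-Einstein manifold with Einstein constant $2n+2$, and by Proposition \ref{prop:reckziegel} the totally geodesic Legendrian $L$ projects to a totally geodesic Lagrangian $\pi(L)$, finitely covered by $L$ and therefore of the same constant positive curvature. The last and hardest step is the rigidity of $B$: I must show that a compact K\"ahler-Einstein manifold carrying a totally geodesic Lagrangian submanifold of constant positive curvature is $\CP^n$ with a Fubini-Study metric. My plan is to reconstruct the curvature of $B$ along $\pi(L)$: the Gauss equation for the totally geodesic $\pi(L)$ gives that $\Rm^B$ restricted to $T_p\pi(L)$ is that of constant curvature, and combining this with the K\"ahler identities (which relate components with entries in $J(T_p\pi(L))$ to components on $T_p\pi(L)$) together with the $\SO(n)$-invariance inherited from the symmetry forces $\Rm^B_p$ to be the curvature tensor of constant holomorphic sectional curvature. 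Equivalently, I would realize $\pi(L)$ as the fixed locus of the reflection in $\pi(L)$, which is an antiholomorphic isometric involution — globally defined because a compact K\"ahler-Einstein metric is real-analytic — exhibiting $B$ as Hermitian symmetric with $\pi(L)$ as a real form, whose classification then singles out $\RP^n\subset\CP^n$. Either way $B=\CP^n$, so $M$ is a principal circle bundle over $\CP^n$.

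The main obstacle I anticipate is precisely this last base-rigidity step: upgrading ``constant sectional curvature along the single totally geodesic Lagrangian $\pi(L)$'' to ``constant holomorphic sectional curvature on an open set of $B$'', since $\pi(L)$ has measure zero and the group $\SO(n+1)$ need not act transitively on $B$. Making rigorous either the real-analytic globalization of the reflection or the Schur-type propagation of the pointwise curvature condition is where the genuine work lies.
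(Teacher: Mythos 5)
Your first step coincides with the paper's: equality forces equality throughout \eqref{so}, hence $\dim\ker\alpha=\tfrac{n(n+1)}2$ and $L$ is $S^n$ or $\RP^n$ with full $\lie{so}(n+1)$ symmetry. Your Schur-lemma argument for $h\equiv 0$ is then a genuinely different and more direct route to total geodesy than the paper's, which obtains it only \emph{a posteriori} from the fact that the pairs $(\tilde L, B)$ appearing in its classification are symmetric; your version looks sound for $n\ge 2$ (and $n=1$ is trivial), since the isotropy coming from $\ker\alpha$ acts as the standard $\SO(n)$ on $T_pL$ and on $\Phi(T_pL)$, $h$ takes values in $\R\xi_p\oplus\Phi(T_pL)$, and neither summand contains a copy of the irreducible $S^2_0T_pL$.

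The genuine gap is the base-rigidity step, and it is worse than your own diagnosis suggests: the statement you set out to prove is \emph{false}. The complex hyperquadric $Q_n=\SO(n+2)/(\SO(2)\times\SO(n))$ is compact K\"ahler--Einstein and contains the round sphere $S^n$, $u\mapsto[u:i]$, as a totally geodesic Lagrangian of constant positive curvature; it is even a real form, the fixed locus of the antiholomorphic isometric involution $[z:w]\mapsto[\bar z:-\bar w]$. So a fully rigorous version of your reflection/real-form plan lands in the classification of real forms of Hermitian symmetric spaces, which contains $(Q_n,S^n)$ right next to $(\CP^n,\RP^n)$ and cannot ``single out $\RP^n\subset\CP^n$''. (Two further unproved claims in that plan: an antiholomorphic isometric involution with Lagrangian fixed locus does not by itself make $B$ Hermitian symmetric, and geodesic reflection in a totally geodesic half-dimensional submanifold need not be an isometry --- $\pi(L)$ would have to be reflective.) The paper avoids all of this by keeping the group action on $B$, which your proposal discards after step one: the $\SO(n+1)$-action has cohomogeneity one, the complexified orbit through a point of $\tilde L$ is open, dense and Stein by \cite{bedgor}, so $B$ is a two-orbit manifold, and Ahiezer's classification yields exactly $B=Q_n$ or $B=\CP^n$. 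The decisive missing idea is that the multiplicity hypothesis must then be used a \emph{second} time to exclude the quadric: by \cite{ChenNagano} the totally geodesic $S^n\subset Q_n$ carries $\tfrac{n}{2n+2}$ times the round metric, so the eigenvalue $2n+2$ of $\Delta_L$ corresponds to the round eigenvalue $n$, and comparing its multiplicity with the bound $\dim\lie{aut}(M)\le\dim\lie{aut}(Q_n)+1=\tfrac12(n+1)(n+2)+1$ shows the assumed equality of multiplicities fails for $n>1$, while $Q_1=\CP^1$. Since your argument never revisits the equality hypothesis after the first step, the quadric case cannot be eliminated within your framework.
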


\begin{proof}
The projection $\tilde L = \pi(L) \subseteq B$ is a Lagrangian submanifold of a K\"ahler-Einstein manifold and it is known that $\tilde L$ is covered by $L$ \cite{reckziegel}.

To have equality one needs to have equality in \eqref{so} so we conclude that the isometry group of $\tilde L$ is the largest possible, i.e. its Lie algebra is $\lie{so}(n+1)$. Let this isometry group be denoted by $K$.
The group $K$, being a subgroup of the Sasaki transformation group of $M$, sends leaves into leaves and thus acts on $B$. We claim that the action has cohomogeneity one.

Indeed it is known, see \cite{transfgroups},  that if $\dim K = \dim \lie{so}(n+1)$ then $L^n$ is either a $n$-sphere or $\RP^n$, written as $\SO(n+1)/H$, where $H= \SO(n)$ or $H= \Z_2 \cdot \SO(n)$ is the stabilizer of a $q \in \tilde L$.
In any case the isotropy representation of $H$ acts transitively on the unit sphere $T_q \tilde L$.  Being $\tilde L$ Lagrangian, the action is transitive also on the unit sphere in the normal space at $q$ and this action has cohomogeneity one, hence also the action of $\SO(n+1)$ on $B$ does.

Let $p \in \tilde L$. Being $\tilde L$ homogeneous under $K$, it is also known from \cite{bedgor} that the orbit $\Omega = K^\C \cdot p$ is open dense in $B$ and Stein, hence in particular affine, and that $B \backslash \Omega$ has complex codimension $1$.

Let $x \in B$ be a principal point. Being $\Omega$ open dense, the $K^\C$-orbit through $x$ is open as well and  intersects $\Omega$, then they coincide. So $B$ is a two-orbit K\"ahler manifold, i.e. is acted on by a complex algebraic group admitting exactly two orbits $\Omega$ and $A$.

They were classified, as complex manifolds, by Ahiezer \cite[Table~2]{ahiezer} in the case of $\Omega$ affine and $A$ of codimension $1$. The occurrences of a group $K$ with Lie algebra $\lie{so}(n+1)$ can be one of the following:

\begin{enumerate}
\item $\tilde L = \SO(n+1)/ \SO(n) = S^n$ and $B = Q_{n}$,
\item $\tilde L = \frac{\SO(n+1)}{\textup{center}}/ S(\O(1) \times \O(n)) = \RP^n$ and $B = \CP^n$,
\item $\tilde L = \Spin(7) / \G_2 = S^7$ and $B = Q_7$,
\item $\tilde L = \SO(7) / \G_2 = \RP^7$ and $B = \CP^7$;
\end{enumerate}
where the projective spaces and the complex hyperquadrics are endowed with the unique K\"ahler-Einstein metric of constant $2n+2$.
This proves that the possible $B$ are only complex hyperquadrics or complex projective spaces and $M$ is a Sasaki-Einstein principal circle bundles over $B$.

Being the pairs in this list symmetric subspaces of $B$, we have that $\tilde L$ is totally geodesic in $B$. By Proposition \ref{prop:reckziegel} of Reckziegel, this is equivalent to say that $L$ is totally geodesic in $M$.  

We want now to exclude the case $B = Q_n$. So far we have the following diagram of immersions and submersions.
\[
\begin{tikzpicture}
\matrix (m) [matrix of math nodes, row sep=6em, column sep=5em,text height=1.5ex, text depth=0.25ex] 
{ (S^n,g)			& 	(M	, g_\textup{SE})			&		(S^{2n+3}, g_c) 			& (\C^{n+2}, \frac 4 c \langle \cdot, \cdot \rangle ) 		\\ 
 (S^n, g)			&	(Q_n, g_Q)					&		(\CP^{n+1}, g^\textup{FS}_c)	 &\\ }; 
\path[right hook->] (m-1-1) edge node[auto] {$\tilde \imath$} (m-1-2);
\path[->](m-1-2) edge node[auto] {$\pi$} (m-2-2); 
\path[->] (m-1-1) edge node[anchor=east] {$=$} (m-2-1);
\path[right hook->] (m-2-1) edge node[auto]{$i$} (m-2-2);
\path[right hook->] (m-2-2) edge node[auto]{$j$} (m-2-3);
\path[->] (m-1-3) edge node[auto] {$p$} (m-2-3);
\path[right hook->] (m-1-3) edge node[auto] {}(m-1-4);
\end{tikzpicture} 
\]

For the metric point of view, we have the Fubini-Study metric $g^\textup{FS}_c$ on $\CP^{n+1}$ 
with constant holomorphic curvature $c$.
This rescaling of the Fubini-Study metric on $\CP^{n+1}$ 
is defined by the metric given by $\frac 4 c$ times the round metric on $S^{2n+3}$, which we denote by $g_c$ \cite[vol.~II, p.~273]{kn}.
The choice of $c$ in $g^\textup{FS}_c$ is such that $g_Q = j^* g^\textup{FS}_c$
is K\"ahler Einstein of Einstein constant $2n+2$ and this happens exactly for $c= \frac{4n+4}{n}$ from \cite{smyth}.

By \cite{ChenNagano} the only totally geodesic spheres in the quadric are immersions $i:x \mapsto [x]$ for $x \in S^n \subset \R^{n+1}$.
The restriction of the quadric metric on it is $\frac{n}{2n+2}$ times the round metric. Being $S^n$ simply connected for $n>1$, we have that the Legendrian $L$ is isometric to its projection in $Q_n$.

Let $\Delta$ be the Laplacian on $S^n$ associated to the metric $\frac{n}{2n+2} g_\textup{round}$. An eigenfunction of $\Delta$ with eigenvalue $2n+2$ is an eigenfunction of the round Laplacian with eigenvalue $n$.

It is known from \cite{spectre} that the round sphere admits the eigenvalue $n$ with multiplicity $n(n+1)$.

To compute the lower bound, we observe that, since every Sasaki automorphisms induces by projection a K\"ahler automorphism of the base, that $\dim \lie{aut}(M) \leq \dim \lie{aut}(B) + 1 = \frac 1 2 (n+2)(n+1) + 1$ since the automorphism group of the hyperquadric is $\SO(n+2)$.

In order not to attain the lower bound we need to have
\[
\dim \lie{aut}(M) < \frac 3 2 n(n+1) + 1
\]
and this is always true since for $n>1$ we have $ \frac 1 2 (n+2)(n+1) + 1 <  \frac 3 2 n(n+1) + 1$.

In the case $n=1$ the quadric $Q_1 = \CP^1$ is a complex projective space, so we are left with the only case $B = \CP^n$. 
\end{proof}
\section{Eigenfunctions using the Nomizu operator} \label{sec:nomizu}

In this section we define another family of eigenfunctions on a Legendrian $L$ of $M$ by making use of the geometry of the K\"ahler cone and its group of K\"ahler automorphisms.

Let $(M, g)$ be a Sasakian manifold of dimension $2n+1$ and let $(C(M), \bar g)$ be its K\"ahler cone.
We let $e_A$ for $A \in \{1, \ldots, 2n+1 \}$ be a local orthonormal frame at some point of $M$ and let $\theta_A$ be its dual.

Then the set $\{ \frac 1 r e_1, \ldots, \frac 1 r e_{2n+1}, \de_r \}$ is an orthonormal frame for the cone metric 
$\bar g = r^2 g + dr^2$ and its dual is $\{ r \theta_1, \ldots, r \theta_{2n+1}, dr \}$.

Let $\bnabla$ be the Levi-Civita connection of the cone metric. From the well known relations \cite{sparks_survey} we have
\begin{align*}
\bnabla \de_r	&= \frac 1 r e_A \otimes \theta_A \\
\bnabla e_B		&= \frac 1 r e_B \otimes dr + \theta_{BC} \otimes e_C - r \de_r \otimes \theta_B.
\end{align*}

\begin{lemma}\label{lemma:extrinsicandintrinsic}
Let $L^n \rightarrow M$ be an immersion  
and let $e_1, \ldots, e_n$ be an orthonormal frame of $L$.
Let $\nabla$ be the Levi Civita connection on $M$. 
Then, for any smooth function $f: M \rightarrow \R$, we have

\begin{equation}\label{equa:extrinsicandintrinsic}
\Delta_L f|_L  = - \sum_{i=1}^n \nabla d f (e_i , e_i )|_L  - H \cdot f |_L  . 
\end{equation}
where $\Delta_L$ is the Hodge Laplacian and $H$ is the mean curvature field of the immersion.

In particular, when the immersion is minimal, we have

\begin{equation}\label{equa:minimalextrinsicandintrinsic}
\Delta_L f |_L =  - \sum_{i=1}^n \nabla d f (e_i , e_i ) |_L  . 
\end{equation}
\end{lemma}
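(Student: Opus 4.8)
The plan is to reduce the identity to the elementary comparison between the intrinsic Hessian of the restricted function and the ambient Hessian, with the discrepancy governed by the second fundamental form. First I would work pointwise at $p \in L$ with the given orthonormal frame $e_1, \dots, e_n$ of $T_pL$. Write $\nabla^L$ for the Levi-Civita connection of the metric induced on $L$ and $B$ for the second fundamental form of the immersion, so that the Gauss formula
\[
\nabla_X Y = \nabla^L_X Y + B(X,Y)
\]
holds for $X,Y$ tangent to $L$, with $B(X,Y)$ normal. The mean curvature field is then $H = \sum_{i=1}^n B(e_i,e_i)$.

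Second, for tangent vectors $X,Y$ I would expand both Hessians straight from their definitions. Since $f|_L$ and its tangential derivatives coincide with the restrictions of the ambient derivatives of $f$, one has
\[
\Hess_L(f|_L)(X,Y) = X(Yf) - (\nabla^L_X Y)f, \qquad \nabla df(X,Y) = X(Yf) - (\nabla_X Y)f,
\]
and subtracting yields
\[
\Hess_L(f|_L)(X,Y) - \nabla df(X,Y) = (\nabla_X Y - \nabla^L_X Y)f = df(B(X,Y)).
\]
This last line, which uses only the Gauss formula, is the sole genuinely geometric input of the proof.

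Third, I would trace this identity over $e_1, \dots, e_n$. Using $H = \sum_i B(e_i,e_i)$ and $H \cdot f = df(H)$ gives
\[
\sum_{i=1}^n \Hess_L(f|_L)(e_i,e_i) = \sum_{i=1}^n \nabla df(e_i,e_i) + H \cdot f.
\]
Finally, invoking the sign convention for the Hodge Laplacian on functions, namely $\Delta_L(f|_L) = -\tr_L \Hess_L(f|_L) = -\sum_i \Hess_L(f|_L)(e_i,e_i)$, and rearranging produces \eqref{equa:extrinsicandintrinsic}. The minimal case \eqref{equa:minimalextrinsicandintrinsic} then follows at once from $H=0$.

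The computation carries no real difficulty; the only point demanding care is the bookkeeping of sign conventions. Concretely, one must use that the Hodge Laplacian on functions equals \emph{minus} the metric trace of the Hessian, and that $H$ here denotes the \emph{full} trace $\sum_i B(e_i,e_i)$ rather than its average $\frac1n \sum_i B(e_i,e_i)$. Fixing these two conventions consistently is exactly what pins down the signs of the two terms on the right-hand side of the stated formula, and is where I would expect any discrepancy to creep in.
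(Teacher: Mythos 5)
Your proof is correct and follows essentially the same route as the paper: the paper also expands $\sum_i \nabla df(e_i,e_i)$, inserts $\pm\sum_i \nabla^L_{e_i}e_i\, f$, and identifies the intrinsic Laplacian (with the Hodge sign) and the traced Gauss-formula discrepancy $\sum_i(\nabla_{e_i}e_i-\nabla^L_{e_i}e_i)f = H\cdot f$. Your pointwise Hessian comparison before tracing is just a reorganization of the same computation, and your flagged conventions (Hodge Laplacian as minus the trace of the Hessian, $H$ the unnormalized trace of the second fundamental form) are exactly those the paper uses.
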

\begin{proof}
 Label as $\nabla^L$ the induced connection on $L$; by definition we have
\begin{align*}
 \sum_{i=1}^n \nabla d f (e_i , e_i ) |_L 	&= \sum_ i e_i e_i f |_L  - \sum_{i=1}^n \nabla_{e_i} e_i f |_L  \\
					&= \sum_{i=1}^n e_i e_i f |_L -  \sum_{i=1}^n \nabla^L_{e_i} e_i f |_L  - \sum_{i=1}^n (\nabla_{e_i} e_i f |_L - \nabla^L_{e_i} e_i f |_L )\\
					&= -\Delta_L f |_L - H \cdot f |_L  ,
\end{align*}
which is precisely the claimed \eqref{equa:extrinsicandintrinsic}. 
Since the assumption on minimality corresponds to the vanishing of $H$, 
we also get the claimed \eqref{equa:minimalextrinsicandintrinsic}.
This completes the proof of the lemma. 
\end{proof}

\begin{lemma}\label{lemma:laplacianofefdoesnotdependonr}
Let $L^n \rightarrow M$ be a \emph{minimal} immersion in a Sasaki manifold.
Let $f$ be a function on the K\"ahler cone $C(M)$ which does not depend on $r$ and let $\Delta_L$ be the Hodge Laplacian on $L$;
finally, let $e_1 , \cdots , e_n$ be an orthonormal frame of L. 
Then we have
\[
\Delta_L f |_L= - \sum_{i=1}^n \bnabla df (e_i, e_i) |_L.
\]
\end{lemma}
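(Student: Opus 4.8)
The plan is to reduce the claim to the previous lemma, Lemma \ref{lemma:extrinsicandintrinsic}, whose formula \eqref{equa:minimalextrinsicandintrinsic} already expresses $\Delta_L f|_L$ as $-\sum_i \nabla df(e_i,e_i)|_L$ in terms of the \emph{ambient} Levi-Civita connection $\nabla$ of $M$. The only thing to verify is therefore that for a function $f$ on $C(M)$ which is independent of $r$, the cone Hessian and the Sasaki Hessian agree when both are contracted against vectors $e_i$ tangent to $L \subseteq M \subseteq C(M)$, namely
\begin{equation*}
\bnabla df(e_i,e_i)|_L = \nabla df(e_i,e_i)|_L.
\end{equation*}
Granting this, the statement follows at once by substituting into \eqref{equa:minimalextrinsicandintrinsic}.

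The computation of $\bnabla df(e_i,e_i)$ is where the work lies. By definition $\bnabla df(e_i,e_i) = e_i(e_i f) - (\bnabla_{e_i}e_i) f$, and similarly with $\nabla$ in place of $\bnabla$; since the first term $e_i(e_i f)$ is identical in both expressions, the difference reduces to $(\nabla_{e_i}e_i - \bnabla_{e_i}e_i) f$. So the key step is to compare the two connections on vector fields tangent to $M$. Here I would invoke the explicit cone formula recalled just above the lemma, namely
\begin{equation*}
\bnabla e_B = \frac 1 r e_B \otimes dr + \theta_{BC} \otimes e_C - r \de_r \otimes \theta_B,
\end{equation*}
from which $\bnabla_{e_i} e_i$ picks up, besides the tangential-to-$M$ part $\theta_{ij}(e_i)e_j$ that reproduces $\nabla_{e_i}e_i$, two extra terms: a $\de_r$-component $-r\,\theta_i(e_i)\de_r = -r\,\de_r$ (summed suitably over the frame) and an $e_i\otimes dr$-type correction along $\de_r$. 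The difference $\nabla_{e_i}e_i - \bnabla_{e_i}e_i$ is thus a combination of $\de_r$ and $dr$-directed terms.

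The main obstacle, then, is showing that these extra cone terms annihilate $f$. This is exactly where the hypothesis that $f$ \emph{does not depend on} $r$ enters: any term proportional to $\de_r$ acting on $f$ vanishes because $\de_r f = 0$, and any term of the form $e_B\otimes dr$ contributes a factor $dr(\,\cdot\,)$ paired against a tangent vector $e_i \in TM$, which also vanishes since $dr(e_i)=0$. Hence every correction term produced by passing from $\nabla$ to $\bnabla$ either differentiates $f$ in the $r$-direction or is killed by $dr$ on a horizontal vector, so all of them drop out and the two Hessians coincide on the $e_i$. I would carry this out by writing $\bnabla_{e_i}e_i$ explicitly from the frame formula, isolating the $\theta_{ij}$ piece as the ambient $\nabla_{e_i}e_i$, and checking term by term that the remainder lies in $\operatorname{span}(\de_r)$ or carries a $dr$ contracted with a tangent vector; combined with $\de_r f = 0$ this finishes the proof once we feed the result into \eqref{equa:minimalextrinsicandintrinsic}.
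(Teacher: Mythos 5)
Your proposal is correct and follows essentially the same route as the paper: reduce to Lemma \ref{lemma:extrinsicandintrinsic}, then compare the two Hessians using the cone connection formula, noting that the correction terms are either proportional to $\de_r$ (killed by $\de_r f = 0$) or carry $dr$ contracted against vectors tangent to $M$ (killed by $dr(e_i)=0$). The paper condenses this into the single identity $\bnabla_{e_i}e_j = \nabla_{e_i}e_j - \delta_{ij}\,r\de_r$ from \cite[(1.1)]{sparks_survey}, which is exactly the term-by-term check you describe.
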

\begin{proof}
In view of Lemma \ref{lemma:extrinsicandintrinsic}, 
it suffices to show that for any $i,j \in \{1,\cdots  ,n\}$,
then
\begin{align}
 \bnabla d f (e_i , e_j ) |_L = \nabla d f (e_i , e_j) |_L ,
\end{align}
where as usual $\nabla$ is the  Levi Civita of the Sasaki  metric $g$, 
while $\bnabla$ is the Levi Civita connection of the metric $\bar g = r^2 g + dr^2$.
By the very definition we have
\begin{align*}
 \bnabla d f (e_i , e_j ) |_L   & = e_i e_j f |_L - \bnabla_{e_i} e_j \cdot f |_L \\
				& = e_i e_j f |_L - \bigl ( \nabla_{e_i} e_j \cdot f |_L - \delta_{ij}r\de_r f |_L \bigr ) \\
				& = \nabla d f (e_i , e_j) |_L   ,
\end{align*}
where at the second equality we applied \cite[(1.1)]{sparks_survey}.
This completes the proof of the lemma.
\end{proof}

Let us now construct a family of operators.
For an infinitesimal K\"ahler automorphism $K$ on the cone, i.e. Killing and holomorphic, we define the operator on sections of $TC(M)$ given by
\begin{equation}
M_K = \bnabla K + \frac 1 {2n+2} \div (JK) J.
\end{equation}

\begin{lemma}\label{lemmapropMK}
Let $C(M)$ be the K\"ahler cone over a Sasaki-Einstein manifold and let $K$ as above. Then
\begin{enumerate}[(i)]
\item \label{divJKconst} $\div(JK) = \const$;
\item \label{MJJM} $M_K$ is skew-symmetric and $M_K J = J M_K$;
\item \label{traceJMK} $\tr(JM_K) = 0$;
\item \label{nablaM} $\bnabla M_K = \bar \Rm (\cdot, K)$ where $\bar \Rm$ is the Riemann $(3,1)$-tensor of $\bar g$.
\end{enumerate}
\end{lemma}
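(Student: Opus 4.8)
The plan is to set $A := \bnabla K$, the endomorphism $X \mapsto \bnabla_X K$, and reduce everything to two classical facts about $K$. Since $K$ is Killing, $A$ is $\bar g$-skew and satisfies the second-order (Kostant) identity $\bnabla A = \bar\Rm(\cdot, K)$; since $K$ is holomorphic and $\bar g$ is Kähler, $\bnabla J = 0$ and $A$ commutes with $J$, i.e. $AJ = JA$. I also record that $J$ is $\bar g$-skew and that, because $\bnabla J = 0$, one has $\bnabla_X(JK) = J\bnabla_X K = JAX$, whence $\div(JK) = \tr(JA)$. Writing $c := \frac{1}{2n+2}\div(JK)$, the operator in question is simply $M_K = A + cJ$.

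Granting these, parts \eqref{MJJM} and \eqref{traceJMK} are immediate algebra. Both $A$ and $cJ$ are $\bar g$-skew, so $M_K$ is skew; and both commute with $J$ (one by holomorphicity, the other trivially), so $M_K J = J M_K$, giving \eqref{MJJM}. For \eqref{traceJMK}, using $J^2 = -\id$ and $\dim_\R C(M) = 2n+2$, we get $\tr(JM_K) = \tr(JA) + c\,\tr(J^2) = \tr(JA) - (2n+2)c = \tr(JA) - \div(JK) = 0$; the normalising constant $\frac{1}{2n+2}$ is precisely tuned to produce this cancellation.

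The substantive point, and the step I expect to be the main obstacle, is \eqref{divJKconst}: that $h := \div(JK) = \tr(JA)$ is constant. This is where Ricci-flatness of the cone (equivalently, the Sasaki-Einstein hypothesis, via the Proposition recalled above) must enter. I would differentiate: since $\bnabla J = 0$, for any $X$ one has $X(h) = \tr(J\,\bnabla_X A)$, and by the Kostant identity $\bnabla_X A = \bar\Rm(\cdot, K)$. The resulting trace $\tr(J\,\bar\Rm(\cdot,\cdot))$ is, up to a factor of $2$, the Ricci form $\rho$ of the Kähler metric, and $\rho = \Ric(J\,\cdot\,,\,\cdot\,)$; hence $X(h)$ equals, up to sign, $2\rho(K,X) = 2\Ric(JK, X)$, which vanishes identically because $C(M)$ is Ricci-flat. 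Thus $dh = 0$ and $h$ is constant. The delicate part here is aligning the curvature and trace conventions so that the contraction genuinely reproduces the Ricci form; that bookkeeping is the one place the argument requires care.

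Finally, \eqref{nablaM} follows by differentiating $M_K = A + cJ$ directly. With $c$ constant by \eqref{divJKconst} and $J$ parallel, $\bnabla_X(cJ) = (Xc)J + c\,\bnabla_X J = 0$, so $\bnabla_X M_K = \bnabla_X A = \bar\Rm(\cdot, K)$ by the Kostant identity, which is exactly the claim. In this way the whole lemma rests on the two structural identities for $K$ (Killing and holomorphic) together with the single analytic input of Ricci-flatness used in \eqref{divJKconst}.
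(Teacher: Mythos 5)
Your proposal is correct and follows essentially the same route as the paper: you prove \eqref{MJJM} from skewness of the Killing operator and the holomorphicity relation $AJ=JA$, \eqref{traceJMK} by the same trace cancellation that the normalising constant $\frac{1}{2n+2}$ is designed for, \eqref{divJKconst} by differentiating $\div(JK)$ via the Kostant identity $\bnabla A = \bar \Rm(\cdot, K)$ and identifying the resulting trace with $2\bar\Ric(\cdot,K)$ through the K\"ahler identity $\Ric(X,Y)=\frac12 \tr(\Rm(X,Y)\circ J)$, which vanishes by Ricci-flatness of the cone, and \eqref{nablaM} exactly as the paper does from \eqref{divJKconst} and $\bnabla J = 0$. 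The only difference is cosmetic ordering (you defer \eqref{divJKconst} as the substantive step), and the convention bookkeeping you flag is precisely what the paper's geodesic-frame computation carries out.
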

\begin{proof}
Let $A_K$ be the associated Nomizu operator, i.e. $A_K = \bnabla K$. Then since $K$ is Killing, its covariant derivative is known to be $\bnabla \,  \bnabla K = \bar \Rm(\cdot, K)$. 
\begin{enumerate}[(i)]

\item Fix $p \in C(M)$ and let $v_i$  be a geodesic frame at $p$ and let $Y$ be any vector field on $C(M)$. Then
\begin{align*}
Y \cdot \div(JK)|_p &= \bar g (\bnabla_Y \bnabla_{v_i} JK, v_i) \\
			&= - \bar g ( \bnabla_Y \bnabla_{v_i} K, J v_i) \\
			&= -\bar g ( (\bnabla_Y A_K) v_i, J v_i)\\
			&= \bar \Rm(Y, K, Jv_i, v_i) \\
			&= 2 \bar \Ric (Y, K)\\
			&= 0
\end{align*}
 since $C(M)$ is Ricci-flat (see \cite{sparks_survey}), where we have used the well known fact that $\Ric(X, Y) = \frac 1 2 \tr (\Rm(X, Y) \circ J)$.
 
 \item Since $K$ is holomorphic it is $\bnabla_{J \cdot} K = J \bnabla_\cdot K$ so $M_K J = J M_K$. 
Since $K$ is Killing, $\bnabla K$ is skew-symmetric and also $J$, so \eqref{MJJM} follows.
 
 \item Let $v_i$ be an orthonormal frame of $C(M)$. Then
 \begin{align*}
 \tr(JM_K) &= \bar g(JM_K v_i, v_i)\\
 		&= \bar g \biggl (\bnabla_{v_i} JK - \frac 1 {2n+2} (\div(JK)) v_i, v_i  \biggr ) \\
		&= \div(JK) - \div(JK) \\
		&= 0.
 \end{align*}
\item By \eqref{divJKconst} and the fact that $J$ is parallel, \eqref{nablaM} follows. \qedhere 
\end{enumerate}
\end{proof}

We will use the following lemma.
\begin{lemma}\label{lemmaRm}
Let $X$ be any field on $M$. Then $\bar \Rm (r \de_r, J r \de_r) K$ and $\bar \Rm (r \de_r, J X) K$ vanish.
\end{lemma}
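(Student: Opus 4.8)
The plan is to exploit the cone structure and the special behaviour of the radial field $r\de_r$ under the cone's curvature tensor. Recall that on a Riemannian cone $C(M) = M \times \R^+$ with metric $\bar g = r^2 g + dr^2$, the vector field $r\de_r$ (the Euler or homothety field) is very rigid: it satisfies $\bnabla_Y (r\de_r) = Y$ for every $Y$, which is exactly the content of the relation $\bnabla \de_r = \frac 1 r e_A \otimes \theta_A$ recorded before Lemma \ref{lemma:extrinsicandintrinsic}. First I would extract from this that $\bnabla(r\de_r) = \id$, i.e. the Nomizu operator of $r\de_r$ is the identity endomorphism of $TC(M)$.

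The key observation is that the curvature tensor measures the failure of second covariant derivatives to commute, and since $r\de_r$ has covariant derivative equal to the identity, its second covariant derivative vanishes. Concretely, for any vector fields $Y, Z$ one computes $\bnabla_Y \bnabla_Z (r\de_r) - \bnabla_{\bnabla_Y Z}(r\de_r) = \bnabla_Y Z - \bnabla_Y Z = 0$, so that $\bar\Rm(Y,Z)(r\de_r) = 0$ for \emph{all} $Y,Z$. In other words, $r\de_r$ lies in the kernel of the curvature operator in its last slot. I would then invoke the standard symmetries of the $(3,1)$ Riemann tensor: skew-symmetry in the first two arguments and the pair-symmetry $\bar\Rm(Y,Z,W,U) = \bar\Rm(W,U,Y,Z)$ of the associated $(4,0)$ tensor. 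These symmetries let me move $r\de_r$ out of the last slot and into a first or second slot at the cost of a sign, showing that $r\de_r$ also annihilates the curvature when inserted there.

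With this in hand the lemma follows immediately. For $\bar\Rm(r\de_r, Jr\de_r)K$, the first argument is $r\de_r$; applying the pair-symmetry we rewrite $\bar g(\bar\Rm(r\de_r, Jr\de_r)K, W) = \bar g(\bar\Rm(K,W)(r\de_r) \cdot, Jr\de_r)$-type manipulation, but more directly the skew-symmetry in the first two slots combined with $\bar\Rm(\cdot,\cdot)(r\de_r)=0$ via pair-symmetry gives vanishing. Similarly $\bar\Rm(r\de_r, JX)K$ vanishes because its first entry is again the radial field $r\de_r$, which the curvature kills. So the whole statement reduces to the single fact that $r\de_r$ is a curvature-flat direction, which is a direct consequence of $\bnabla(r\de_r) = \id$.

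The main obstacle, such as it is, will be bookkeeping: I must be careful to distinguish the slot in which $r\de_r$ sits and to apply precisely the right symmetry of $\bar\Rm$ to transfer the vanishing from the last slot (where it is manifest) to the first slot (where it is needed for both expressions). One subtlety is that $\bar\Rm(\cdot, K) = \bnabla M_K$ (from Lemma \ref{lemmapropMK}\eqref{nablaM}) evaluates curvature with $K$ in a fixed slot, so I should state the symmetry step cleanly once and then apply it uniformly, rather than recomputing for each of the two expressions. There are no analytic difficulties; the content is entirely the algebraic rigidity of the cone's Euler field.
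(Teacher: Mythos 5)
Your proof is correct, but it takes a genuinely different route from the paper's. The paper argues via holomorphy: it first shows $\bnabla_{r\de_r}K = [r\de_r, K] + K$ is holomorphic (sum of holomorphic fields, using $\bnabla r\de_r = \id$), then expands $\bar\Rm(r\de_r, Jr\de_r)K$ by the definition of curvature and uses $\bnabla_{J\cdot}K = J\bnabla_{\cdot}K$ to pull $J$ out of both second-derivative terms, which then cancel, the bracket term dying since $[r\de_r, r\de_r]=0$; the same computation handles $\bar\Rm(r\de_r, JX)K$. Your route is purely Riemannian: from $\bnabla(r\de_r)=\id$ you get $\bnabla^2_{Y,Z}(r\de_r) = \bnabla_Y(\bnabla_Z\, r\de_r) - \bnabla_{\bnabla_Y Z}(r\de_r) = \bnabla_Y Z - \bnabla_Y Z = 0$, hence $\bar\Rm(Y,Z)(r\de_r)=0$ for all $Y,Z$, and the pair symmetry of the $(4,0)$ tensor transfers this to the first slot:
\[
\bar g\bigl(\bar\Rm(r\de_r, Y)K, W\bigr) = \bar\Rm(K, W, r\de_r, Y) = \bar g\bigl(\bar\Rm(K,W)(r\de_r), Y\bigr) = 0.
\]
This is in fact stronger than the lemma: it shows $\bar\Rm(r\de_r,\cdot)\,\cdot \equiv 0$ on any metric cone, making the $J$'s and all hypotheses on $K$ (Killing, holomorphic, even Ricci-flatness of the cone) irrelevant to this particular statement. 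What each approach buys: yours isolates the genuinely operative structure --- the Euler field of a cone is a curvature-flat direction --- and delivers maximal generality with only the standard curvature symmetries as input; the paper's computation avoids invoking the pair symmetry altogether and stays within the complex-geometric toolkit ($J$-linearity of $\bnabla K$) that it uses throughout Section \ref{sec:nomizu}, at the cost of hypotheses your argument shows are unnecessary here. One cosmetic point: the symmetry manipulation sketched in your third paragraph is garbled as written ($\bar g(\bar\Rm(K,W)(r\de_r)\cdot, Jr\de_r)$ is not a well-formed expression); the clean statement is the single displayed identity above, applied uniformly to both cases, exactly as your final paragraph anticipates.
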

\begin{proof}
We notice that $\bnabla_{r \de_r} K$ is holomorphic. Indeed, using that $r \de_r$ is holomorphic,  it is
\begin{align*}
\bnabla_{r \de_r} K 	&= [r \de_r, K] + \bnabla_K r\de_r \\
				&= [r \de_r, K] + K
\end{align*}
using that $\bnabla r \de_r = \id$. Hence $\bnabla_{r \de_r}K$  is holomorphic being the sum of two holomorphic fields.
Then we compute
\begin{align*}
\bar \Rm(r \de_r, J r\de_r) K &= \bnabla_{r \de_r} \bnabla_{J r \de_r} K -  \bnabla_{J r \de_r}  \bnabla_{r \de_r} K - \bnabla_{[r \de_r, J r \de_r]} K \\
							&= J \bnabla_{r \de_r} \bnabla_{r \de_r} K - J \bnabla_{r \de_r} \bnabla_{r \de_r} K - \bnabla_{J [r \de_r, r \de_r]} K \\
							&= 0.
\end{align*}
Similarly $\bar \Rm (r \de_r, J X) K = 0$.
\end{proof}

Now consider the family of functions on $f_K : C(M) \rightarrow \mathbb{R}$ defined as
\begin{equation}  \label{equa:defnefK}
f_K = \bar g (M_K \de_r, J \de_r).
\end{equation}
We exploit the fact that $\tr (JM_K) = 0$ for the following lemma, that also uses that $L$ is Legendrian.
\begin{lemma} \label{lemma210}
Let $e_i$ be a frame of the Legendrian $L$. Then
\[
 \sum_{i=1}^n \bar g(M_K e_i, Je_i) = - r^2 f_K.
 \]
\end{lemma}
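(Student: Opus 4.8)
The plan is to read the identity straight off the vanishing of $\tr(JM_K)$ (Lemma \ref{lemmapropMK}(\ref{traceJMK})) by evaluating that trace in an orthonormal frame of the cone adapted to $L$, so that the $\de_r$-direction produces the term $f_K$ and the $TL$-directions produce the sum on the left-hand side.

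First I would assemble such a frame. Along $L$ the Sasaki metric splits $TM = \R\xi \oplus D$ with $D = \ker\eta$, and since $L$ is Legendrian we have $TL \subseteq D$ together with $i^*d\eta = d(i^*\eta) = 0$; because $d\eta(X,Y) = g(\Phi X, Y)$, this forces $\Phi(TL) \perp TL$, so that $D = TL \oplus \Phi(TL)$ orthogonally along $L$. Hence, for a $g$-orthonormal frame $e_1, \dots, e_n$ of $L$, the $2n+2$ vectors
\[
\de_r, \quad \tfrac1r \xi, \quad \tfrac1r e_1, \dots, \tfrac1r e_n, \quad \tfrac1r \Phi e_1, \dots, \tfrac1r \Phi e_n
\]
form a $\bar g$-orthonormal frame of $TC(M)$, since $|\xi|_{\bar g} = |e_i|_{\bar g} = |\Phi e_i|_{\bar g} = r$ and the listed directions are mutually $\bar g$-orthogonal. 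The crucial geometric input is how $J$ acts: with the cone conventions of \cite{sparks_survey} one has $J(r\de_r) = \xi$, i.e. $J\de_r = \tfrac1r\xi$, and $JX = \Phi X$ for $X \in D$; thus $\tfrac1r\xi = J\de_r$ and $\Phi e_i = J e_i$, so the frame above is a union of the $J$-invariant orthonormal planes $\{\de_r, J\de_r\}$ and $\{\tfrac1r e_i, \tfrac1r J e_i\}$.

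Now I would exploit that $M_K$ is skew-symmetric and commutes with $J$ (Lemma \ref{lemmapropMK}(\ref{MJJM})): these two facts make $JM_K$ a $\bar g$-symmetric endomorphism that still commutes with $J$, so on every $J$-invariant orthonormal plane $\{u, Ju\}$ its two diagonal entries coincide, $\bar g(JM_K u, u) = \bar g(JM_K Ju, Ju)$. Writing $S := \sum_i \bar g(M_K e_i, J e_i)$ and using that $J$ is $\bar g$-skew, the plane $\{\de_r, J\de_r\}$ contributes
\[
2\,\bar g(JM_K \de_r, \de_r) = -2\,\bar g(M_K \de_r, J\de_r) = -2 f_K,
\]
while the plane $\{\tfrac1r e_i, \tfrac1r \Phi e_i\}$ contributes $\tfrac{2}{r^2}\bar g(JM_K e_i, e_i) = -\tfrac{2}{r^2}\bar g(M_K e_i, J e_i)$, which sum to $-\tfrac{2}{r^2}S$. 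Adding all planes and invoking $\tr(JM_K) = 0$ gives
\[
0 = \tr(JM_K) = -2 f_K - \frac{2}{r^2} S,
\]
whence $S = -r^2 f_K$, which is exactly the claim.

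The only genuinely delicate point is the frame set-up: one must use Legendrianity (through $i^*d\eta = 0$) to guarantee $\Phi(TL) \perp TL$, so that the proposed $2n+2$ vectors really do close up into an orthonormal basis of the cone, and one must keep the $r$-scalings and the sign conventions for $J$ on the cone straight. Once the adapted orthonormal frame and the identities $J\de_r = \tfrac1r\xi$, $J e_i = \Phi e_i$ are in place, the remaining computation is just the pairing of diagonal entries of $JM_K$ over the $J$-invariant planes.
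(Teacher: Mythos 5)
Your proof is correct and follows essentially the same route as the paper's: extend $\{e_i\}$ to the adapted orthonormal frame $\{\frac 1 r e_i, \frac 1 r J e_i, \de_r, J\de_r\}$ of the cone (valid precisely because $L$ is Legendrian) and evaluate $\tr(JM_K)=0$ using that $M_K$ is skew and commutes with $J$. You merely make explicit two points the paper leaves implicit --- that $i^*d\eta=0$ forces $\Phi(TL)\perp TL$ so the frame closes up, and that the pairing of diagonal entries of $JM_K$ over $J$-invariant planes is what collapses the trace --- so no changes are needed.
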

\begin{proof}
Since $L$ is Legendrian, we can extend $\{ e_i\}$ to an \emph{orthonormal} frame $\{ \frac 1 r e_i, J \frac 1 r e_i, \de_r, \frac 1 r \xi =  J \de_r  \}$ of $C(M)$. Then
\begin{align*}
0 = \tr (J M_K) &=  \frac 1 {r^2} \sum_{i=1}^n \biggl [ \bar g( JM_K e_i, e_i) + \bar g( M_K J e_i, Je_i) \biggr ] \\
                &\phantom{---------}+ \bar g (JM_K J \de_r, J \de_r)  + \bar g(JM_K \de_r, \de_r)
\end{align*}
and from Lemma \ref{lemmapropMK}.\eqref{MJJM} we infer
\[
2 r^2 f + \sum_{i=1}^n 2 \bar g(M_K e_i, J e_i) = 0. \qedhere
\]
\end{proof}

\begin{lemma}\label{lemma:efconstantonar}
For any Killing and holomorphic vector field $K\in \Gamma(TC(M))$, the function $f_K$ is constant along the direction $\de_r$. 
\end{lemma}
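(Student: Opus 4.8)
The plan is to prove the statement by differentiating the defining expression \eqref{equa:defnefK} directly along $\de_r$ and showing the result is zero, exploiting that radial curves on the cone are geodesics together with the identity $\bnabla M_K = \bar \Rm(\cdot, K)$ from Lemma \ref{lemmapropMK}.\eqref{nablaM} and the curvature vanishing of Lemma \ref{lemmaRm}. The first ingredient I would record is that $\bnabla_{\de_r} \de_r = 0$: this is immediate from the connection relation $\bnabla \de_r = \frac 1 r e_A \otimes \theta_A$ since the $\theta_A$ annihilate $\de_r$. Because $J$ is parallel for $\bar g$, this also yields $\bnabla_{\de_r}(J\de_r) = J \bnabla_{\de_r}\de_r = 0$.

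Next I would differentiate $f_K = \bar g(M_K \de_r, J\de_r)$ along $\de_r$ using metric compatibility of $\bnabla$. The term carrying $\bnabla_{\de_r}(J\de_r)$ drops out by the observation above, leaving
\[
\de_r f_K = \bar g\bigl( \bnabla_{\de_r}(M_K \de_r),\, J\de_r \bigr).
\]
Expanding $\bnabla_{\de_r}(M_K \de_r) = (\bnabla_{\de_r} M_K)\de_r + M_K(\bnabla_{\de_r}\de_r)$, the second summand vanishes since $\bnabla_{\de_r}\de_r = 0$, while the first is $\bar \Rm(\de_r, K)\de_r$ by Lemma \ref{lemmapropMK}.\eqref{nablaM}. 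Hence the whole computation reduces to showing the single curvature quantity $\bar g(\bar \Rm(\de_r, K)\de_r, J\de_r)$ is zero.

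To dispose of this term I would use the symmetries of the Riemann tensor to bring it into the exact contraction controlled by Lemma \ref{lemmaRm}. Writing $\bar \Rm(X,Y,Z,W) = \bar g(\bar \Rm(X,Y)Z, W)$, the pair symmetry gives $\bar \Rm(\de_r, K, \de_r, J\de_r) = \bar \Rm(\de_r, J\de_r, \de_r, K)$, and antisymmetry in the last pair turns this into $-\bar g(\bar \Rm(\de_r, J\de_r)K, \de_r)$. But $\bar \Rm(\de_r, J\de_r)K = 0$ is precisely the rescaled form of the first identity in Lemma \ref{lemmaRm}, so $\de_r f_K = 0$ and $f_K$ is constant in $r$. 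The only delicate point of the whole argument is this last step: one must apply the Riemann symmetries in the right order so that the expression collapses to $\bar \Rm(\de_r, J\de_r)K$ — the specific contraction killed by Lemma \ref{lemmaRm} — rather than to $\bar \Rm(\de_r, \cdot)\de_r$, which would instead require separately invoking the radial flatness of the cone. Everything else is routine once $\bnabla_{\de_r}\de_r = 0$ and Lemma \ref{lemmapropMK}.\eqref{nablaM} are in place.
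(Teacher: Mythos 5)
Your proof is correct and follows essentially the same route as the paper's: both differentiate $f_K$ using $\bnabla_{\de_r}\de_r = 0$ together with $\bnabla J = 0$, reduce via Lemma \ref{lemmapropMK}.\eqref{nablaM} to the single curvature term $\bar g(\bar \Rm(\de_r, K)\de_r, J\de_r)$, and annihilate it by pair symmetry and antisymmetry so that Lemma \ref{lemmaRm} applies. The ``delicate point'' you single out is handled identically in the paper, whose displayed computation passes through the same intermediate expression $-\bar \Rm(r\de_r, Jr\de_r, K, r\de_r)$ before invoking Lemma \ref{lemmaRm}.
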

\begin{proof}
  Since $\bnabla_{\de_r} \de_r = 0$, we have
\begin{align*}
\de_r f_K &= \bar g( (\bnabla_{\de_r} M_K) \de_r, J \de_r) \\
		&= \frac 1 {r^3} \bar \Rm(r \de_r, K, r \de_r, J r \de_r)\\
		&= - \frac 1 {r^3} \bar \Rm (r \de_r, J r \de_r, K, r \de_r)\\
		&= 0
\end{align*}
by Lemma \ref{lemmaRm}.
\end{proof}

We prove the following.
\begin{thm} \label{thmeigenfunction}
For any Legendrian minimal immersion $L^n \rightarrow M$ in a Sasaki-Einstein manifold, 
and for any both holomorphic and Killing vector field on the K\"ahler cone $K\in \Gamma (TC(M))$, then 
the functions $f_K$ defined by \eqref{equa:defnefK} are eigenfunctions of $\Delta_L$ with eigenvalue $2n+2$.
\end{thm}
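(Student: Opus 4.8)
The plan is to reduce the statement to an ambient Hessian computation on the cone and then to kill the resulting curvature terms using the symmetries established in Lemmas \ref{lemmapropMK} and \ref{lemmaRm}. First, by Lemma \ref{lemma:efconstantonar} the function $f_K$ does not depend on $r$, so Lemma \ref{lemma:laplacianofefdoesnotdependonr} applies and gives $\Delta_L f_K|_L = -\sum_{i=1}^n \bnabla df_K(e_i,e_i)|_L$, where $e_1,\dots,e_n$ is an orthonormal frame of $L$. On $L$ we have $r=1$, so this frame is $\bar g$-orthonormal and, $L$ being Legendrian, extends to the cone frame $\{\frac 1 r e_i, J\frac 1 r e_i, \de_r, J\de_r\}$ used in Lemma \ref{lemma210}. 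Thus it suffices to compute the ambient Hessian $\bnabla df_K$ and to trace it over $e_1,\dots,e_n$.

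Next I would compute $\grad f_K$. Differentiating $f_K = \bar g(M_K\de_r, J\de_r)$ and using that $J$ is parallel, that $\bnabla M_K = \bar \Rm(\cdot, K)$ by Lemma \ref{lemmapropMK}.\eqref{nablaM}, and the cone identity $\bnabla_Y\de_r = \frac 1 r(Y - \bar g(Y,\de_r)\de_r)$, the only curvature contribution is $\bar \Rm(Y, K, \de_r, J\de_r)$. By the pair symmetry of $\bar \Rm$ this equals $-\bar g(Y, \bar \Rm(\de_r, J\de_r)K)$, which vanishes by the first identity of Lemma \ref{lemmaRm}. Using then the skew-symmetry of $M_K$ and $M_K J = J M_K$ from Lemma \ref{lemmapropMK}.\eqref{MJJM}, the two remaining terms combine and I expect to obtain $\grad f_K = -\frac 2 r\,J M_K\de_r - \frac 2 r\, f_K\,\de_r$.

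I would then evaluate $\sum_i \bar g(\bnabla_{e_i}\grad f_K, e_i)$ at points of $L$, where $\bar g(e_i,\de_r)=0$ and $\bnabla_{e_i}\de_r = e_i$. Differentiating the two summands of $\grad f_K$ produces three contributions: a term $2\sum_i\bar g(M_K e_i, J e_i)$, which by Lemma \ref{lemma210} equals $-2 f_K$; a term $-2n f_K$ coming from $\bnabla_{e_i}(f_K\de_r)$; and a curvature term $2\sum_i \bar \Rm(e_i, K, \de_r, J e_i)$. The crux is to show this last sum vanishes: by the pair symmetry $\bar \Rm(e_i, K, \de_r, J e_i) = \bar \Rm(\de_r, J e_i, e_i, K)$, and since $\bar \Rm(\de_r, J e_i)K = 0$ by the second identity of Lemma \ref{lemmaRm} (applied with the field $e_i$ tangent to $M$), each summand is $\bar \Rm(\de_r, Je_i, e_i, K) = -\bar g(e_i, \bar \Rm(\de_r, Je_i)K) = 0$. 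Collecting the surviving terms gives $\sum_i \bnabla df_K(e_i,e_i) = -(2n+2) f_K$, hence $\Delta_L f_K = (2n+2) f_K$.

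The routine but delicate part is the Hessian bookkeeping, checking that the two non-curvature terms assemble exactly into $-(2n+2)f_K$: the factor $-2$ from the $M_K$-term via Lemma \ref{lemma210} and the factor $-2n$ from the $f_K\de_r$-term. The genuine obstacle is the vanishing of the curvature trace $\sum_i\bar \Rm(e_i,K,\de_r,Je_i)$; here the second identity of Lemma \ref{lemmaRm}, applied with $X=e_i$, is indispensable, as it is precisely what eliminates the Ricci-type contraction that the mere K\"ahler symmetries would leave undetermined and that only the Ricci-flatness of the cone can control.
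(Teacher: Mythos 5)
Your proposal is correct and takes essentially the same approach as the paper: the same reduction via Lemmas \ref{lemma:efconstantonar} and \ref{lemma:laplacianofefdoesnotdependonr}, the same killing of the curvature terms $\bar \Rm(e_i, K, \de_r, Je_i)$ and $\bar\Rm(\cdot,K,\de_r,J\de_r)$ via Lemma \ref{lemmaRm}, and the same use of the trace identity of Lemma \ref{lemma210} to produce the eigenvalue $2n+2$ from the contributions $-2f_K$ and $-2nf_K$. The only difference is organizational: you package the first derivative as the gradient formula $\grad f_K = -\frac 2r JM_K\de_r - \frac 2r f_K \de_r$ and trace $\bar g(\bnabla_{e_i}\grad f_K, e_i)$, whereas the paper computes $e_ie_if_K$ and $\nabla_{e_i}e_if_K$ separately and lets the tangential-connection terms cancel.
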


\begin{proof}
We fix a vector  field $K$ as in the statement and we set $f= f_K$.
In order to compute $\Delta_L f$, we notice that 
Lemma \ref{lemma:efconstantonar} allows us 
to apply Lemma \ref{lemma:laplacianofefdoesnotdependonr}.
Thus, let $\{ e_1 , \cdots , e_n \}$ be a local frame of $L$.
 
We begin with observing that, for any such vector field $e_i$, 
then there holds
\begin{align}
 \label{equa:derivativeofef}
e_i f = \frac{2}{r} \bar g (M_K \de_r , J e_i).
\end{align}
In fact, 
\begin{align*}
 e_i f & = \bar g ((\bnabla_{e_i} M_K) \de_r , J \de_r) 
         + \bar g ( M_K \bnabla_{e_i} \de_r , J \de_r)
         + \bar g ( M_K \de_r , J \bnabla_{e_i} \de_r )\\
     & = \bar g ( \bar \Rm (e_i, K)\de_r , J\de_r)
         +2 \bar g (M_K \de_r , J \bnabla_{e_i} \de_r)  \\
     & = \frac{2}{r} \bar g (M_K \de_r , J e_i),     
\end{align*}
where at the second equality we applied Lemma \ref{lemmapropMK}.\eqref{MJJM} and \eqref{nablaM}, 
at the third equality we applied Lemma \ref{lemmaRm} and \cite[(1.1)]{sparks_survey}.
Similarly as for \eqref{equa:derivativeofef}, we also get
\begin{align}
 \label{equa:derivativeofef2}
\nabla_{e_i} e_i f =  \frac{2}{r} \bar g (M_K \de_r , J \nabla_{e_i} e_i).
\end{align}
Now we compute
\begin{align*}
 e_i e_i f &= e_i \biggl ( \frac{2}{r} \bar g (M_K \de_r , J e_i) \biggr ) \\
	  &= \frac{2}{r} \biggl (
		  \bar g ((\bnabla_{e_i} M_K) \de_r , J e_i) 
		+ \bar g ( M_K \bnabla_{e_i} \de_r , J e_i)
		+ \bar g ( M_K \de_r , J \bnabla_{e_i} e_i )
			  \biggr )  \\
	  &= \frac{2}{r} \biggl (
		  \bar g (\bar \Rm (e_i, K)\de_r , J e_i)
		+ \frac{1}{r}\bar g ( M_K e_i , J e_i) \\
	  &\phantom{-------}	+ \bar g ( M_K \de_r , J \nabla_{e_i} e_i )
		- \bar g ( M_K \de_r , J r\de_r)
			  \biggr )  \\
	  &= \frac{2}{r^2} \bar g ( M_K e_i , J e_i)
		+ \frac{2}{r} \bar g ( M_K \de_r , J \nabla_{e_i} e_i )
		- 2\bar g ( M_K \de_r , J \de_r),   
\end{align*}
where at the third equality we applied Lemma \ref{lemmapropMK}.\ref{nablaM} and \cite[(1.1)]{sparks_survey}, 
at the third equality we applied Lemma \ref{lemmaRm} and \cite[(1.1)]{sparks_survey}.

 Finally we compute
\begin{align*}
 \Delta_L f |_L	&= -\sum_{i=1}^n \bnabla d f (e_i , e_i)|_L\\
		&= -\sum_{i=1}^n ( e_i e_i f - \bnabla_{e_i}e_i f )|_L\\
		&= -\sum_{i=1}^n ( e_i e_i f - \nabla_{e_i}e_i f + r\de_r f  )|_L \\
		&= -\sum_{i=1}^n \biggl ( \frac{2}{r^2} \bar g ( M_K e_i , J e_i)
		    + \frac{2}{r} \bar g ( M_K \de_r , J \nabla_{e_i} e_i )\\
		&\phantom{------}    
		    - 2\bar g ( M_K \de_r , J \de_r) - \frac{2}{r} \bar g (M_K \de_r , J \nabla_{e_i} e_i) \biggr ) \biggr |_L\\
		&= (2n+2)f|_L,
\end{align*}
where at the third equality we applied \cite[(1.1)]{sparks_survey}, 
at the fourth equality we applied Lemma \ref{lemma:efconstantonar} and \eqref{equa:derivativeofef2},
at the fifth equality we applied Lemma \ref{lemma210}.
This completes the proof of the theorem. 
\end{proof}

\begin{oss}
Let us see how to recover the functions of L\^e-Wang in this setting.
Let $M \in \lie{su}(n+1)$ and consider it as a real $(2n+2) \times (2n+2)$ matrix. It is skew-symmetric and such that $\tr(JM) = 0$.
Consider the vector field on $\C^{n+1}$ given at $x$ by $K_x = Mx$, which is Killing and holomorphic.
We claim that the function $f_K$ is exactly the function $\langle Mx, Jx \rangle$.
Indeed, if $\bnabla$ is the flat connection on $\C^{n+1}$, it is $\bnabla_y K = My$ for $y \in \C^{n+1}$.
Moreover $\div(JK) = \tr(JU) = 0$. So $f_K = \langle Mx, Jx \rangle$ after identifying $x$ with $\de_r|_{(x, 1)}$.
\end{oss}

Let us now see the connection between our two different generalizations.
It is known that there is an inclusion $\lie{aut}(M) \subseteq \lie{aut}(C(M))$ of the algebra of infinitesimal Sasaki automorphisms of $M$ into the algebra of infinitesimal K\"ahler automorphisms of the cone $C(M)$. It consists in seeing a field $V \in \lie{aut}(M)$ trivially extended to the cone and it turns out to be holomorphic and Killing with respect to the cone metric.

We proved in Theorem \ref{thm:eigenf_etaX} that for $X \in \lie{aut}(M)$ the functions on $L$ given by $\eta(X) - \frac 1 {\vol(L)} \int_L \eta(X)dv$ are eigenfunctions of $\Delta_L$ with eigenvalue $2n+2$, in the Sasaki-Einstein assumption.
By seeing $X$ as an infinitesimal K\"ahler automorphism of $C(M)$ we compute
\[
M_X \de_r = \frac 1 r  \biggl [ \bnabla_{r \de_r} X + \frac 1 {2n+2} \div(JX) \xi \biggr ]
\]
and $J \de_r = \frac 1 r \xi$.
Taking their inner product we have
\begin{equation} \label{eq:etaXfX}
f_X = \bar g (M_X \de_r, J \de_r) = \frac 1 {r^2} \bar g(X, \xi) + \frac 1 {2n+2} \div(JX) = \eta(X) +  \frac 1 {2n+2} \div(JX).
\end{equation}

Using Theorem \ref{thm:eigenf_etaX} together with Theorem \ref{thmeigenfunction} we have, after applying the Laplacian to \eqref{eq:etaXfX}, that
\begin{equation}
f_X = \eta(X) - \frac 1 {\vol(L)} \int_L \eta(X)dv.
\end{equation}

Hence our second generalization extends the first.

In the L\^e-Wang setting, we reobtain the fact that $\int_L \eta(X)dv = 0$, which is a fortiori true being $\eta(X)$ an eigenfunction of $\Delta_L$.

\bibliography{../biblio}
\bibliographystyle{amsplain}

\end{document}